\numberwithin{equation}{section}
\newtheorem{thm}{Theorem}[section]
\newtheorem{lem}[thm]{Lemma}
\newtheorem{coro}[thm]{Corollary}
\newtheorem{prop}[thm]{Proposition}
\newtheorem{conj}[thm]{Conjecture}
\theoremstyle{definition}
\newtheorem{ex}[thm]{Example}
\newtheorem{rem}[thm]{Remark}
\numberwithin{equation}{section}
\newcommand{\pf}{{\rm PF}}
\newcommand{\tp}{{\rm TP}}
\def\R{\mathbb{R}}
\def\N{\mathbb{N}}
\begin{document}
\title
{Real-rootedness  of the type A minuscule polynomials}
\author{Ming-Jian Ding}
\address[Ming-Jian Ding]{School of Mathematical Sciences,
 Dalian University of Technology, Dalian 116024, P. R. China}
\email{ding-mj@hotmail.com}

\author[Jiang Zeng]{Jiang Zeng}
\address[Jiang Zeng]{Universite Claude Bernard Lyon 1, ICJ UMR5208, CNRS, Centrale Lyon, INSA Lyon, Universit\'e Jean Monnet\\
69622, Villeurbanne Cedex, France}
\email{zeng@math.univ-lyon1.fr}
\date{\today}

\begin{abstract}
We prove two recent conjectures of Bourn and Erickson (2023) regarding the real-rootedness
of a certain family of polynomials $N_n(t)$ as well as  the sum of their coefficients.
These polynomials   arise as the numerators of generating functions in the context of the discrete one-dimensional earth mover's distance (EMD) and
have also connection to the Wiener index of minuscule lattices.
We also  prove that the coefficients of $N_n(x)$ are asymptotically normal,
the coefficient matrix of $N_n(x)$ is  totally positive and the polynomial sequence
$N_n(x)$'s is $x$-log-concave.

\bigskip
{\sl MSC:}\quad 05A15; 05A10; 15B36; 62E20; 05A19
\bigskip\\
{\sl Keywords:}\quad Minuscule polynomial of type A, real-rootedness, asymptotic normality, total positivity.
\end{abstract}
\maketitle
%\tableofcontents
\section{Introduction}

For $n\in \N$ define  the polynomials
\begin{equation}\label{poly+exp+formula}
N_n(x):=\frac{1}{4n+2}\sum_{k=1}^{n-1}k(n-k)\binom{2n+2}{2k+1}x^k.
%=\sum_{k=1}^{n-1}N_{n,k}x^k
\end{equation}
The coefficients of $N_n(x)$ are  the Wiener index of a minuscule lattice of type~A~\cite{DFNW23}, i.e., the Hasse diagram of the poset of order ideals in a $k\times(n-k)$ rectangle,
we will refer to $N_n(x)$ as the {\it Minuscule polynomial of type A}.
Below we list these polynomials for the first few values of $n$:
\begin{align*}
N_1(x)&=0,\\
N_2(x)&=2x,\\
N_3(x)&=8x+8x^2,\\
N_4(x)&=20x+56x^2+20x^3,\\
N_5(x)&=40x+216x^2+216x^3+40x^4,\\
N_6(x)&=70x+616x^2+1188x^3+616x^4+70x^5.
\end{align*}
These polynomials  also appeared   as the numerators of generating functions in the context of the discrete one-dimensional earth mover's distance (EMD). Recently,
Bourn and Erickson~\cite{BE23} proved a conjecture of Bourn and Willenbring~\cite{BW20}  regarding the palindromicity and unimodality of $N_n(x)$ and proposed the following conjectures.

\begin{conj}\label{conj+poly+RZ}
The polynomial $N_n(x)$ has only real zeros for each $n \in \mathbb{N}$.
\end{conj}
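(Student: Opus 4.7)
The plan is to realize $N_n(x)$ as the image of an explicitly real-rooted auxiliary polynomial
\[
 P_n(x) := \sum_{k=0}^{n}\binom{2n+2}{2k+1}x^k
\]
under a linear operator that preserves the class of polynomials (of degree at most $n$) with only real nonpositive zeros. Writing $D := x\,d/dx$, so that $D x^k = k x^k$, the definition \eqref{poly+exp+formula} rearranges to
\[
(4n+2)\,N_n(x) \;=\; \sum_{k=0}^{n}k(n-k)\binom{2n+2}{2k+1}x^k \;=\; (nD-D^2)\,P_n(x) \;=\; D\,(n-D)\,P_n(x).
\]
I will therefore (i)~compute the zeros of $P_n$ in closed form to show that $P_n$ has only simple real negative roots, and (ii)~verify that each of $D$ and $n-D$ maps polynomials of degree at most $n$ with only real nonpositive zeros into itself.

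For (i), I would use the identity $(1+y)^{2n+2}-(1-y)^{2n+2}=2\sum_{k=0}^{n}\binom{2n+2}{2k+1}y^{2k+1}$, which gives the closed form
\[
 P_n(x) \;=\; \frac{(1+\sqrt{x})^{2n+2}-(1-\sqrt{x})^{2n+2}}{2\sqrt{x}}.
\]
Then $P_n(x_0)=0$ is equivalent to $(1+\sqrt{x_0})/(1-\sqrt{x_0})$ being a $(2n+2)$th root of unity different from $\pm 1$. Writing such a root as $e^{i\pi k/(n+1)}$ with $1\le k\le n$ and solving yields $\sqrt{x_0}=i\tan(\pi k/(2(n+1)))$, hence the $n$ simple real negative zeros $x_k=-\tan^2(\pi k/(2(n+1)))$, which account for all $n$ roots since $\deg P_n = n$.

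For (ii), the operator $D: p\mapsto xp'$ preserves the class by Rolle's theorem: the zeros of $p'$ interlace and therefore inherit the nonpositivity of those of $p$, while the factor $x$ contributes an additional zero at $0$. For $n-D:p\mapsto np-xp'$ applied to $p(x)=a_n\prod_{i=1}^{n}(x-\alpha_i)$ with distinct $\alpha_i<0$, dividing by $p$ yields
\[
 \frac{(n-D)p(x)}{p(x)} \;=\; -\sum_{i=1}^{n}\frac{\alpha_i}{x-\alpha_i},
\]
a rational function with simple poles at the $\alpha_i$ having positive residues $-\alpha_i>0$; it is monotone from $+\infty$ to $-\infty$ on each of the $n-1$ intervals $(\alpha_i,\alpha_{i+1})$ (crossing zero exactly once there) and is of constant sign outside $[\alpha_1,\alpha_n]$, so all $n-1$ zeros of the degree-$(n-1)$ polynomial $(n-D)p$ are real and lie in $(\alpha_1,\alpha_n)\subset(-\infty,0)$. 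Applying $n-D$ followed by $D$ to $P_n$ will then prove the conjecture. The main conceptual step is the factorization of the diagonal weight $k(n-k)$ through the Euler operator as $D(n-D)$; once this is recognized, the preservation statements are a classical consequence of Rolle's theorem (equivalently, of the P\'olya--Schur theorem applied to the finite multiplier sequence $\lambda_k=k(n-k)$, for which one checks that $\sum_{k=0}^{n}\binom{n}{k}\lambda_k x^k = n(n-1)\,x(1+x)^{n-2}$ has only nonpositive real zeros).
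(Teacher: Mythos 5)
Your proposal is correct and follows essentially the same two-step strategy as the paper: first establish that $f_n(x)=\sum_{k=0}^{n}\binom{2n+2}{2k+1}x^k$ has only real negative zeros (you do this via roots of unity, the paper via the Chebyshev factorization \eqref{chebyshev+poly+formula}, yielding the same roots $-\tan^2\bigl(k\pi/(2n+2)\bigr)$), and then apply the diagonal weight $k(n-k)$, which the paper handles by citing the Craven--Csordas characterization of multiplier $n$-sequences using the identity $\sum_k k(n-k)\binom{n}{k}x^k=n(n-1)x(1+x)^{n-2}$ that you also note. Your self-contained verification of the multiplier step via the factorization $k(n-k)\leftrightarrow D(n-D)$, Rolle's theorem, and the partial-fraction argument is a valid elementary substitute for the cited lemma.
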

\begin{conj}\label{conj+poly+iden}
For all positive integers $n$, we have $N_n(1)=(n-1)2^{2n-3}$.
\end{conj}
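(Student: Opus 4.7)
\textbf{Plan for Conjecture \ref{conj+poly+iden}.} My plan is to reduce $N_n(1)$ to two elementary moments of the odd-indexed binomial coefficients $\binom{2n+2}{2k+1}$, and then extract those moments from a closed form of the odd-part generating function. Setting $x=1$ in \eqref{poly+exp+formula} and expanding $k(n-k)=nk-k^{2}$ gives
\[
(4n+2)\,N_n(1) \;=\; n A - B, \qquad A:=\sum_{k=0}^{n} k\binom{2n+2}{2k+1},\quad B:=\sum_{k=0}^{n} k^{2}\binom{2n+2}{2k+1},
\]
where the sums may be extended to $k=0$ and $k=n$ since those terms vanish.

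For the linear moment $A$ I would exploit the symmetry $\binom{2n+2}{2k+1}=\binom{2n+2}{2(n-k)+1}$: pairing $k$ with $n-k$ gives $2A = n\sum_{k}\binom{2n+2}{2k+1} = n\cdot 2^{2n+1}$, hence $A=n\,2^{2n}$. For the quadratic moment $B$ I would introduce
\[
g(x):=\sum_{k=0}^{n}\binom{2n+2}{2k+1}x^{2k+1} \;=\; \frac{(1+x)^{2n+2}-(1-x)^{2n+2}}{2},
\]
compute $g''(1)=(n+1)(2n+1)\,2^{2n}$ by differentiating the right-hand side, and match it against the termwise identity $g''(1)=\sum_{k}2k(2k+1)\binom{2n+2}{2k+1}=4B+2A$ to solve for $B$ in closed form.

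Substituting the two moments into $(4n+2)\,N_n(1)=nA-B$ collapses the right-hand side, via the factorization $2n^{2}-n-1=(2n+1)(n-1)$, into $(2n+1)(n-1)\,2^{2n-2}$; dividing by $4n+2=2(2n+1)$ then yields $N_n(1)=(n-1)\,2^{2n-3}$. I do not foresee a real obstacle here: the identity is a closed-form evaluation at a single point, and the choice of $g(x)$ is essentially forced by the need to isolate odd-indexed binomials. The only point requiring care is the algebraic bookkeeping that produces the cancelling factor $2n+1$ in the numerator, without which the denominator $4n+2$ would survive. Alternative routes, such as recognizing $N_n(1)$ as a terminating hypergeometric sum or an induction using the explicit coefficients in \eqref{poly+exp+formula}, would also work but appear less transparent than the moment method above.
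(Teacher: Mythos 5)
Your proposal is correct and is essentially the paper's argument specialized to the point $x=1$: the paper first proves the full polynomial identity \eqref{N-formula} for $N_n(x^2)$ by differentiating the same odd-part generating function $\tfrac{1}{2}\bigl((1+x)^{2n+2}-(1-x)^{2n+2}\bigr)$ to extract the linear and quadratic moments of $\binom{2n+2}{2k+1}$, and then sets $x=1$ (Corollary~\ref{coro+iden+minuscule}). Your symmetry pairing for the linear moment $A$ is a small shortcut, but the mechanism --- binomial moments via derivatives of the odd part of $(1+x)^{2n+2}$ --- is the same.
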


This paper was motivated by the above conjectures.
In Section~\ref{sec+poly+RZ}, we shall prove Conjecture \ref{conj+poly+RZ},
which implies that the polynomial $N_n(x)$ is log-concave and $\gamma$-positive.
In Section~\ref{sec+iden+poly}, we establish a polynomial identity for $N_n(x^2)$, 
which reduces directly to Conjecture~\ref{conj+poly+iden} for $x=1$.
By applying the latter identity
we show that the coefficients of $N_n(x)$ are asymptotically normal.
In Section~\ref{sec+log+concave}, we prove that the polynomial sequence $(N_n(x))_{n\geq0}$ is $x$-log-concave, 
i.e., $N^2_n(x)-N_{n+1}(x)N_{n-1}(x)\in \N[x]$ for $n\geq 2$. 
Lastly, we prove that the coefficient matrix $[N_{n,k}]_{n,k\geq0}$ is totally positive in Section~\ref{sec+TP}.

For reader's convenience we recall some terminology for the later use.
To a sequence of real numbers $(f_0, f_1, \ldots, f_n)$, 
we define the polynomial $f(x)=\sum_{k=0}^n f_k x^k $ and say that the polynomial $f(x)$ is
\begin{itemize}
\item {\it palindromic} (with respect to $n$) if $f_k=f_{n-k}$ for $0\leq k\leq n$,
\item \emph{unimodal} if there exists an index $m$ such that
\begin{equation*}
f_0\le f_1\le\cdots\le f_m\ge\cdots\ge f_{n-1}\ge f_n,
\end{equation*}
\item  \emph{log-concave} if $f_k^2 \geq f_{k-1}f_{k+1}$
for all $1 \leq k \leq n-1$.
\end{itemize}
It is known that the log-concavity of a positive sequence implies its unimodality~\cite{Br15}.
Let $f(x), g(x) \in \mathbb{R}\left[x \right]$ be real-rooted
with zeros $\left\{r_i\right\}$ and $\left\{s_j\right\}$, respectively.
We say that $g(x)$ \emph{interlaces} $f(x)$ if $\deg(f(x))=\deg(g(x))+1=n$ and
\begin{equation}\label{1}
	r_{n} \leq s_{n-1}\leq \cdots  \leq s_{2} \leq r_{2} \leq s_{1} \leq r_{1},
\end{equation}
and that $g(x)$ \emph{alternates left of} $f(x)$ if $\deg(f(x))=\deg(g(x))=n$ and
\begin{equation}\label{2}
	s_{n} \leq r_{n} \leq \cdots  \leq s_{2} \leq r_{2} \leq s_{1} \leq r_{1}.
\end{equation}
We write $g(x) \preceq f(x)$ if  $g(x)$ \emph{interlaces} $f(x)$ or $g(x)$ \emph{alternates left of} $f(x)$.

\section{Real-rootedness of polynomials $N_n(x)$}\label{sec+poly+RZ}

A finite sequence $\lambda=(\lambda_k)_{k=0}^{n}$ of real numbers is called a
\emph{multiplier $n$-sequence} if for any real-rooted polynomial $a_0 + a_1x + \cdots + a_n x^n$ of degree at most $n$,
the polynomial $a_0 \lambda_0 + a_1 \lambda_1 x + \cdots + a_n \lambda_n x^n$ has only real zeros.
The following  is a classical algebraic characterization for multiplier $n$-sequence.
\begin{lem}\label{lem+alg+MS+n}\cite[Theorem 3.7]{CC77}
A real sequence $(\lambda_k)_{k=0}^n$ is a multiplier $n$-sequence if and only if
the polynomial
\begin{equation*}
\sum_{k=0}^n\binom{n}{k}\lambda_kx^k
\end{equation*}
has only real zeros with same sign.
\end{lem}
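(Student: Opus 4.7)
The plan is to prove both directions of the equivalence; the forward implication is essentially formal, while the backward implication is the substantive content.

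For the forward direction, I would apply the multiplier $n$-sequence property to the real-rooted polynomial $(1+x)^n$. Its image under $(\lambda_k)$ is exactly $\sum_{k=0}^n \binom{n}{k}\lambda_k x^k$, which must therefore be real-rooted. To obtain the same-sign condition on the zeros of this image, I would apply the multiplier to the one-parameter family $(x-a)(1+x)^{n-1}$ for $a \geq 0$, producing a continuous family of real-rooted polynomials of degree $n$; if the image of $(1+x)^n$ had zeros of opposite sign, a continuity/collision argument would force two real roots to leave the real line at some intermediate $a$, contradicting real-rootedness throughout.

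For the backward direction, factor $Q(x) := \sum_{k=0}^n \binom{n}{k}\lambda_k x^k = \lambda_n \prod_{i=1}^n (x+s_i)$ with $s_i \geq 0$ (substituting $x \to -x$ if the common sign is positive). The goal is to show that the diagonal operator $T : x^k \mapsto \lambda_k x^k$ preserves real-rootedness on polynomials of degree at most $n$. I would decompose $T = T_{s_1} \circ \cdots \circ T_{s_n}$, where each $T_s$ is the diagonal operator determined by the normalisation $T_s((1+x)^n) = (1+x)^{n-1}(x+s)$. A short binomial computation then gives $T_s(x^k) = \frac{k + s(n-k)}{n} x^k$, equivalently $T_s(p) = s\,p(x) + \frac{1-s}{n}\,x p'(x)$. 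The extremes $s = 0$ and $s \to \infty$ reduce to $p \mapsto x p'$ and $p \mapsto n p - x p'$, both of which preserve real-rootedness on polynomials of degree at most $n$ by Rolle's theorem (the second via reversal: if $q(x) = x^n p(1/x)$, then $n p - x p'$ reversed equals $x q'$).

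The main obstacle is the intermediate case $s > 0$, where one must show that $p \mapsto s\,p + \frac{1-s}{n}\,x p'$ preserves real-rootedness on polynomials of degree at most $n$. This is the finite-sequence heart of the classical P\'olya--Schur theorem. I would handle it by analysing the logarithmic derivative $x p'/p = n + \sum_i r_i/(x - r_i)$ on the real line between consecutive roots $r_i$ of $p$, locating the required number of real solutions to $x p'/p = -sn/(1-s)$ by monotonicity on each interval; alternatively, one can deform $s$ continuously between the two extremes while monitoring multiplicity at collision events to rule out any loss of real zeros. Once each $T_s$ is shown to preserve real-rootedness, composing the $n$ factors recovers $T$ itself, and the multiplier property follows.
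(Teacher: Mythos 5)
The paper does not prove this lemma; it is quoted from Craven--Csordas \cite[Theorem 3.7]{CC77}, whose proof ultimately rests on the Malo--Schur--Szeg\H{o} composition theorem (via Grace's apolarity theorem). So your attempt must stand on its own. The forward direction is broadly right: real-rootedness of the image of $(1+x)^n$ is immediate, though your ``continuity/collision'' argument for the same-sign claim is too vague to check. The standard route is cleaner: applying the sequence to $x^{j}(x+t)(x-t)$ forces $\lambda_j\lambda_{j+2}\ge 0$ for all $j$, so the $\lambda_k$ have constant or alternating signs, and a real-rooted polynomial whose coefficients have constant (resp.\ alternating) signs has all zeros $\le 0$ (resp.\ $\ge 0$).

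The backward direction has a fatal gap: the factorization $T=T_{s_1}\circ\cdots\circ T_{s_n}$ is false. Your formula $T_s(x^k)=\frac{k+s(n-k)}{n}x^k$ is correct, so the composition is the diagonal operator $x^k\mapsto \prod_{i}\frac{k+s_i(n-k)}{n}\,x^k$, whereas $T$ is $x^k\mapsto\lambda_k x^k$ with $\binom{n}{k}\lambda_k=\lambda_n e_{n-k}(s_1,\dots,s_n)$; these do not agree. Concretely, for $n=2$ and $Q(x)=(x+2)(x+3)$ one has $\lambda=(6,\tfrac52,1)$, while $T_2=\mathrm{diag}(2,\tfrac32,1)$ and $T_3=\mathrm{diag}(3,2,1)$ compose to $\mathrm{diag}(6,3,1)\neq T$; equivalently, $T_2\circ T_3$ sends $(1+x)^2$ to $x^2+6x+6$, not to $(x+2)(x+3)$. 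The underlying point is that composing diagonal operators multiplies their symbols \emph{coefficientwise} with respect to the $\binom{n}{k}$-normalization (a Schur--Szeg\H{o}-type product $\sum\binom{n}{k}\mu_k\nu_k x^k$), not as polynomials, so the ordinary linear factors of $Q$ do not induce a factorization of $T$. Your treatment of each individual $T_s$ (via $xp'+cp$ with $c\ge 0$ or $c\le -n$, i.e.\ Laguerre's theorem) is sound, but it proves the wrong statement. The standard repair is to drop the factorization and invoke the composition theorem directly: if $\sum\binom{n}{k}a_kx^k$ is real-rooted and $\sum\binom{n}{k}b_kx^k$ is real-rooted with all zeros of one sign, then $\sum\binom{n}{k}a_kb_kx^k$ is real-rooted; taking the second factor to be $Q$ gives sufficiency at once. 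That theorem is exactly the nontrivial content hiding behind your ``main obstacle,'' and it is not reachable by the interval-counting sketch you give, since the target polynomial $p$ may have zeros of both signs, so $xp'/p$ need not be monotone between consecutive zeros.
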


\begin{ex}
From the binomial formula
$(1+x)^n=\sum_{k=0}^n {n\choose k} x^k$ we get successively
\begin{subequations}
\begin{align}
nx(1+x)^{n-1}&=\sum_{k=0}^n k{n\choose k} x^k,\\
nx(1+nx)(1+x)^{n-2}&=\sum_{k=0}^n k^2{n\choose k} x^k,
\end{align}
and
\begin{equation}
n(n-1)x(1+x)^{n-2}=\sum_{k=0}^n k(n-k){n\choose k} x^k.
\end{equation}
\end{subequations}
By the above lemma, the sequence $(k(n-k))_{0\leq k\leq n}$ is a multiplier $n$-sequence.
\end{ex}
The \href{https://mathworld.wolfram.com/ChebyshevPolynomialoftheSecondKind.html}
{Chebyshev polynomials of the second kind} $U_n(x)$ can be defined by the generating function
\begin{equation*}
\sum_{n=0}^{\infty}U_n(x)t^n=\frac{1}{1-2xt+t^2}.
\end{equation*}
%for $|x| \leq 1$ and $|t| \leq 1$.
They also have the following explicit formulae \cite[p. 696]{Zw95}
\begin{subequations}
\begin{align}
U_n(x)&=\sum_{k=0}^{\left \lfloor n/2 \right \rfloor}\binom{n+1}{2k+1}x^{n-2k}(x^2-1)^k\label{chebyshev+poly+sum+formula}\\
&=2^{n}\prod_{k=1}^n\left[x-\cos\left(\frac{k\pi}{n+1}\right)\right]\label{chebyshev+poly+formula}.
\end{align}
\end{subequations}

\begin{thm}\label{thm+minu+poly+RZ}
The polynomial $N_n(x)$ has only real zeros for each positive integer $n$.
Moreover, we have $N_n(x) \preceq N_{n+1}(x)$ for any $n \in \mathbb{N}$.
\end{thm}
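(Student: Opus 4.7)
The plan is to prove real-rootedness by reducing, via the multiplier-sequence criterion of Lemma~\ref{lem+alg+MS+n}, to an auxiliary polynomial whose roots can be read off directly from the Chebyshev polynomial $U_{2n+1}(y)$, and then to obtain the interlacing $N_n\preceq N_{n+1}$ by transporting the corresponding interlacing of Chebyshev angles through the same reduction.

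Concretely, I would introduce
\begin{equation*}
P_n(x):=\sum_{k=0}^{n}\binom{2n+2}{2k+1}x^k,
\end{equation*}
observe that $k(n-k)=0$ at $k=0,n$, and rewrite $(4n+2)\,N_n(x)=\sum_{k=0}^{n}k(n-k)\binom{2n+2}{2k+1}x^k$. This exhibits $(4n+2)\,N_n(x)$ as the image of $P_n(x)$ under the diagonal operator attached to the multiplier $n$-sequence $(k(n-k))_{k=0}^n$ established in the Example. By Lemma~\ref{lem+alg+MS+n} it then suffices to show $P_n(x)$ has only real zeros, all of the same sign. Dividing \eqref{chebyshev+poly+sum+formula} (applied to $U_{2n+1}$) by $y^{2n+1}$ and substituting $x=1-y^{-2}$ yields the key identity
\begin{equation*}
y^{-(2n+1)}\,U_{2n+1}(y)=P_n\bigl(1-y^{-2}\bigr),
\end{equation*}
and pairing the roots $\pm\cos(k\pi/(2n+2))$ in the product form \eqref{chebyshev+poly+formula} gives the explicit factorization
\begin{equation*}
P_n(x)=(2n+2)\prod_{k=1}^{n}\Bigl(x+\tan^2\tfrac{k\pi}{2n+2}\Bigr).
\end{equation*}
Thus $P_n(x)$ has $n$ distinct negative real zeros, and the multiplier-sequence step finishes the first assertion.

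For $N_n\preceq N_{n+1}$, the factorization just obtained makes $P_n\preceq P_{n+1}$ immediate: setting $\theta_{m,k}:=k\pi/(2m+2)$, the elementary inequalities $\theta_{n+1,k}<\theta_{n,k}<\theta_{n+1,k+1}$ for $1\le k\le n$ (equivalent to $k(2n+4)<(k+1)(2n+2)$) show that the negative numbers $-\tan^2\theta_{n,k}$ and $-\tan^2\theta_{n+1,k}$ interlace as required. To promote this to the interlacing of $N_n$ and $N_{n+1}$, I would rewrite the multiplier operator as $T_m:=xD(m-xD)$ with $D=d/dx$, so that $T_{n+1}=T_n+xD$, and then seek either a Hermite--Kakeya--Obreschkoff type criterion (every nonnegative combination $\alpha N_n(x)+\beta N_{n+1}(x)$ is real-rooted), or a three-term recurrence of the form $N_{n+1}(x)=A_n(x)N_n(x)-B_n(x)N_{n-1}(x)$ with $B_n(x)$ of constant sign on $\mathbb{R}_{\le 0}$, from which interlacing follows by the classical orthogonal-polynomial-type sign-alternation argument.

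The main obstacle will be precisely this final transfer: because $T_n$ and $T_{n+1}$ are genuinely different diagonal operators, $P_n\preceq P_{n+1}$ does not automatically imply $N_n\preceq N_{n+1}$ by a purely soft operator-theoretic argument. I expect the cleanest route is to eliminate $P_n,P_{n+1}$ from the pair of identities $(4n+2)N_n=T_nP_n$ and $(4n+6)N_{n+1}=T_{n+1}P_{n+1}$ in order to derive such a recursion directly, and then to establish the needed positivity of $B_n(x)$ from the explicit binomial coefficients appearing in $N_n(x)$.
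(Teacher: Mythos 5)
Your treatment of the first assertion is essentially the paper's own proof: the same auxiliary polynomial (your $P_n$ is the paper's $f_n$), the same Chebyshev factorization (your $(2n+2)\prod_{k=1}^{n}\bigl(x+\tan^2\tfrac{k\pi}{2n+2}\bigr)$ is equivalent to the paper's $2^{2n+1}\prod_{k=1}^{n}\bigl[1-(1-x)\cos^2\tfrac{k\pi}{2n+2}\bigr]$), and the same appeal to the multiplier $n$-sequence $(k(n-k))_{k=0}^{n}$ via Lemma~\ref{lem+alg+MS+n}. That part is correct and complete, as is your derivation of $P_n\preceq P_{n+1}$ from the monotonicity of $\tan^2$ and the inequalities $\theta_{n+1,k}<\theta_{n,k}<\theta_{n+1,k+1}$.

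The gap is exactly where you flag it: you never carry out the transfer from $P_n\preceq P_{n+1}$ to $N_n\preceq N_{n+1}$, offering only two candidate strategies (an HKO-type argument, or a three-term recurrence with a sign condition on $B_n(x)$) without executing either, so the second assertion of the theorem remains unproved. The resolution is in fact a soft operator argument, but it requires factoring the diagonal operator differently from your $T_m=xD(m-xD)$: apply $xD$ (multiplying the $k$-th coefficient by $k$), reverse the coefficient sequence, apply $xD$ again (which now multiplies by $n-k$), and reverse back. The parameter $n$ then enters only through the reversal, which automatically adapts to the degree, and both constituent operations preserve $\preceq$: if $f\preceq g$ then $xf'\preceq xg'$ (\cite[Theorem 6.3.8]{RS02}), and if $f\preceq g$ with $\deg f=\deg g-1=n$ and all zeros real and negative, then $x^nf(1/x)\preceq x^{n+1}g(1/x)$. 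Chaining these through $f_n\preceq f_{n+1}$ yields $N_n\preceq N_{n+1}$ directly, with no recurrence and no HKO computation needed; this reversal factorization is the single missing idea in your plan.
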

\begin{proof}
By formula~\eqref{chebyshev+poly+sum+formula},
\begin{subequations}
\begin{equation}\label{chebyshev+poly+2n+1}
  U_{2n+1}(x)= x^{2n+1}
 \sum_{k=0}^{n}\binom{2n+2}{2k+1}\left(\frac{x^2-1}{x^2}\right)^k.
\end{equation}
For  $n+2\leq k\leq 2n+1$,  let  $k=2n+2-\ell$, then $\ell \in [n]$ and
formula \eqref{chebyshev+poly+formula} implies
\begin{align}
U_{2n+1}(x) &= \prod_{k=1}^{n}\left[x-\cos\left(\frac{k\pi}{2n+2}\right)\right]\cdot x\cdot2^{2n+1}
               \cdot\prod_{\ell=1}^{n}\left[x-\cos\left(\frac{(2n+2-\ell)\pi}{2n+2}\right)\right]\nonumber\\
            &=x\cdot2^{2n+1}\prod_{k=1}^{n}\left[x^2-\cos^2\left(\frac{k\pi}{2n+2}\right)\right]\label{chebyshev+poly+2n+1+AB}.
\end{align}
Substituting $x^2 \rightarrow 1/(1-x)$ in $x^{-2n-1}U_{2n+1}(x)$ and
combining \eqref{chebyshev+poly+2n+1} and \eqref{chebyshev+poly+2n+1+AB}, we get
\begin{equation}\label{eq+dec+zero}
  f_n(x):= \sum_{k=0}^{n}\binom{2n+2}{2k+1}x^k
 = 2^{2n+1}\prod_{k=1}^{n}\left[1-(1-x)\cos^2\left(\frac{k\pi}{2n+2}\right)\right],
\end{equation}
\end{subequations}
which proves the real-rootedness of the polynomial $f_n(x)$.
By the above example, the polynomial $N_n(x)$ has only real zeros.

By \eqref{eq+dec+zero}, it is not difficult to see that $f_n(x) \preceq f_{n+1}(x)$ since the following inequalities hold:
$$
1-\frac{1}{\cos^2\left(\frac{(k+1)\pi}{2n+4}\right)}
\leq
1-\frac{1}{\cos^2\left(\frac{k\pi}{2n+2}\right)}
\leq 1-\frac{1}{\cos^2\left(\frac{k\pi}{2n+4}\right)}
$$
for each $k \in [n]$.
Note the fact that if $f(x) \preceq g(x)$, then $xf^{'}(x) \preceq xg^{'}(x)$, see \cite[Theorem 6.3.8]{RS02}.
Additionally, the relation $f(x) \preceq g(x)$ with $\deg(f(x))=\deg(g(x))-1=n$
implies that $x^nf(1/x) \preceq x^{n+1}g(1/x)$.
Due to $f_n(x) \preceq f_{n+1}(x)$,
it is routine to verify that $N_n(x) \preceq N_{n+1}(x)$ by the facts which are mentioned before.
\end{proof}

\begin{rem}
In view of Hermite-Biehler Theorem,
the real-rootedness of polynomial $f_n(x)$ follows also from the Hurwitz stability of $(1+x)^{2n+2}$,
see \cite[Theorem 6.3.4]{RS02} for more details.
\end{rem}
It is known~\cite[Lemma 7.1.1]{Br15} that if a polynomial with non-negative coefficients is real-rooted,
then it is log-concave, and if a positive sequence is log-concave, then it is unimodal.
Thus, Theorem~\ref{thm+minu+poly+RZ} implies the following result,
which generalizes the unimodality of $N_n(x)$~\cite[Corollary 4.2]{BE23}.

\begin{coro}
The polynomial $N_n(x)$ is log-concave for each $n \in \mathbb{N}$.
\end{coro}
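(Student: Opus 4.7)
The plan is to derive the corollary as a direct consequence of Theorem~\ref{thm+minu+poly+RZ} together with the classical implication real-rootedness $\Rightarrow$ log-concavity for polynomials with non-negative coefficients, which is already quoted from~\cite[Lemma 7.1.1]{Br15} in the paragraph preceding the statement.

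First, I would note that the coefficients of $N_n(x)$ are non-negative: inspecting the defining formula~\eqref{poly+exp+formula}, the $k$-th coefficient equals $\frac{k(n-k)}{4n+2}\binom{2n+2}{2k+1}$, which is $\geq 0$ for $0\leq k\leq n-1$ (and zero at the endpoints $k=0$ and $k=n-1$ only contributes when the factor $k(n-k)$ is positive). Hence $N_n(x)\in \mathbb{R}_{\geq 0}[x]$.

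Next, by Theorem~\ref{thm+minu+poly+RZ}, the polynomial $N_n(x)$ has only real zeros. Invoking the quoted lemma from Brändén~\cite[Lemma 7.1.1]{Br15}, any real-rooted polynomial with non-negative coefficients has a log-concave coefficient sequence, which is exactly the assertion that $N_n(x)$ is log-concave. There is no real obstacle here: the entire argument is the concatenation of Theorem~\ref{thm+minu+poly+RZ} with a well-known general principle, and the mild thing to check is just the sign of the coefficients, which is immediate from~\eqref{poly+exp+formula}.

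If one wished to make the proof fully self-contained without citing~\cite{Br15}, one could alternatively recall Newton's inequality: for a real-rooted polynomial $\sum_{k=0}^n a_k x^k$ with $a_k\geq 0$, one has $a_k^2 \geq \bigl(1+\tfrac{1}{k}\bigr)\bigl(1+\tfrac{1}{n-k}\bigr)\,a_{k-1}a_{k+1}\geq a_{k-1}a_{k+1}$, which immediately yields the log-concavity of the coefficient sequence and thus finishes the proof of the corollary.
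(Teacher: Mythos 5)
Your argument is correct and coincides with the paper's: the corollary is deduced by combining Theorem~\ref{thm+minu+poly+RZ} with the standard fact~\cite[Lemma 7.1.1]{Br15} that a real-rooted polynomial with non-negative coefficients is log-concave. The added remark on Newton's inequality is a fine self-contained substitute but does not change the substance.
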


By \eqref{poly+exp+formula} it is obvious that $N_n(x)$ is palindromic.
Combining this with Theorem~\ref{thm+minu+poly+RZ}, 
we derive from a known result~\cite[Remark 7.3.1]{Br15} the following $\gamma$-positivity, 
which gives another generalization of both the palindromicity and unimodality of $N_n(x)$~\cite[Corollaries 3.2 and  4.2]{BE23}.
\begin{coro}
There are non-negative integers $\gamma_{n,k}$ such that
\begin{equation}\label{gamma:N}
N_n(x)=\sum_{k=0}^{\lfloor n/2 \rfloor}\gamma_{n,k}x^k(1+x)^{n-2k}.
\end{equation}
\end{coro}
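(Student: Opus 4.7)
The plan is to invoke the classical $\gamma$-positivity criterion cited in the statement (Remark 7.3.1 of \cite{Br15}): every real-rooted palindromic polynomial with non-negative coefficients has a non-negative $\gamma$-expansion. Both hypotheses are already in hand, so the proof reduces to checking palindromicity with respect to the correct index and assembling the pieces.

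First I would verify that $N_n(x)$ is palindromic with respect to the index $n$, i.e.\ that its coefficient sequence $(N_{n,k})_{0\le k\le n}$ (with the convention $N_{n,0}=N_{n,n}=0$) satisfies $N_{n,k}=N_{n,n-k}$. Reading off~\eqref{poly+exp+formula},
\[ N_{n,k} = \frac{k(n-k)}{4n+2}\binom{2n+2}{2k+1}, \]
which is manifestly invariant under $k\leftrightarrow n-k$ since both $k(n-k)$ and $\binom{2n+2}{2k+1}=\binom{2n+2}{2(n-k)+1}$ are. Non-negativity of the coefficients is also immediate, and real-rootedness is exactly Theorem~\ref{thm+minu+poly+RZ}. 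The criterion then delivers an expansion of the form~\eqref{gamma:N} with $\gamma_{n,k}\ge 0$.

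For integrality of the $\gamma_{n,k}$, I would observe that the family $\{x^k(1+x)^{n-2k}\}_{k=0}^{\lfloor n/2\rfloor}$ forms a $\mathbb{Z}$-basis of the space of length-$n$ palindromic integer polynomials, since the change-of-basis matrix to the monomial basis (restricted to the first $\lfloor n/2\rfloor+1$ coordinates) is unitriangular with integer entries; hence expanding the integer polynomial $N_n(x)$ in this basis automatically produces integer $\gamma_{n,k}$. The only mild subtlety is the degree convention: because $\deg N_n=n-1$, we must treat $N_n(x)$ as having ``length $n$'' with vanishing constant and leading terms, which is precisely what forces $\gamma_{n,0}=0$ in the resulting expansion. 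No genuine obstacle is anticipated, since the non-trivial input—real-rootedness—was already supplied by Theorem~\ref{thm+minu+poly+RZ}; should one wish to avoid citing \cite[Remark 7.3.1]{Br15}, a self-contained substitute is to factor $N_n(x)=x\prod_j(x+r_j)(x+r_j^{-1})$ (with an extra $(x+1)$-factor when $n$ is even) using palindromicity and real-rootedness, and to rewrite each reciprocal pair as $(x+r)(x+r^{-1})=(1+x)^2+(r^{1/2}-r^{-1/2})^2 x$, whose expansion yields~\eqref{gamma:N} with non-negative $\gamma_{n,k}$.
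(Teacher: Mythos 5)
Your proof is correct and follows essentially the same route as the paper: check palindromicity directly from \eqref{poly+exp+formula}, combine with the real-rootedness of Theorem~\ref{thm+minu+poly+RZ}, and invoke \cite[Remark 7.3.1]{Br15}. The extra remarks on integrality and the reciprocal-pair factorization are fine in substance, except for a small parity slip in the aside: since $N_n(x)/x$ has degree $n-2$, the forced $(x+1)$-factor occurs when $n$ is odd, not even (e.g.\ $N_3(x)=8x(1+x)$, while $N_4(x)=4x(5+14x+5x^2)$ has none).
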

Below we list the above formula for
the first few values of $n$:
\begin{align*}
N_1(x)&=0,\\
N_2(x)&=2x,\\
N_3(x)&=8x(1+x),\\
N_4(x)&=20x(1+x)^2+16x^2,\\
N_5(x)&=40x(1+x)^3+96x^2(1+x),\\
N_6(x)&=70x(1+x)^4+336x^2(1+x)+16x^3.
\end{align*}
We note that the half sums of the gamma coefficients
$1, 4, 18, 68, 211, \ldots$ are not in {\sc OEIS}~\cite{slo}.
Naturally, it is interesting  to find a combinatorial interpretation of the coefficients $\gamma_{n,k}$.

\section{Asymptotic normality of the coefficients  of $N_n(x)$}\label{sec+iden+poly}

In this section, we firstly establish an identity about $N_n(x)$,
which can be viewed as a polynomial version of Conjecture \ref{conj+poly+iden}.
As an application, we show that the coefficients  of $N_n(x)$ are asymptotically normal.

\subsection{An identity related to Minuscule polynomial of type A}
\begin{prop}\label{prop+iden+minuscule}
We have
\begin{align}\label{N-formula}
   N_n(x^2)
 = &\frac{n+1}{8}\biggl((1+x)^{2n}+(1-x)^{2n}\biggr)
   -\frac{1}{16x}\biggl((1+x)^{2n+2}-(1-x)^{2n+2}\biggr).
\end{align}
\end{prop}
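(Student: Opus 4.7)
The plan is to treat \eqref{N-formula} as a coefficient-wise identity in $x^2$. First I would use the elementary symmetries
\begin{align*}
(1+x)^{2n+2}-(1-x)^{2n+2} &= 2\sum_{k=0}^{n}\binom{2n+2}{2k+1}x^{2k+1},\\
(1+x)^{2n}+(1-x)^{2n} &= 2\sum_{k=0}^{n}\binom{2n}{2k}x^{2k},
\end{align*}
which make both terms on the right-hand side of \eqref{N-formula} visibly polynomials in $x^2$. Substituting these in, the right-hand side collapses into
$$\sum_{k=0}^{n}\left[\frac{n+1}{4}\binom{2n}{2k}-\frac{1}{8}\binom{2n+2}{2k+1}\right]x^{2k}.$$

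Next I would match this against $N_n(x^2)=\frac{1}{4n+2}\sum_{k=1}^{n-1}k(n-k)\binom{2n+2}{2k+1}x^{2k}$ coefficient by coefficient. For the boundary values $k=0$ and $k=n$ the bracket reduces to $\frac{n+1}{4}-\frac{2n+2}{8}=0$, which correctly reflects the fact that these terms are absent from $N_n(x^2)$. For $1\le k\le n-1$ I would exploit the factorial ratio
$$\binom{2n}{2k}=\binom{2n+2}{2k+1}\cdot\frac{(2k+1)(2n+1-2k)}{(2n+2)(2n+1)},$$
factor $\binom{2n+2}{2k+1}/[8(2n+1)]$ out of the bracket, and simplify the remainder $(2k+1)(2n+1-2k)-(2n+1)=2k(2n-2k)=4k(n-k)$. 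The bracketed coefficient then becomes exactly $\frac{k(n-k)}{4n+2}\binom{2n+2}{2k+1}$, which is what \eqref{poly+exp+formula} prescribes for $N_n(x^2)$.

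Since every step reduces to the binomial theorem and a single factorial simplification, I do not anticipate a genuine obstacle. The only points that require attention are the bookkeeping between the factors $2n+1$ and $4n+2=2(2n+1)$, and checking that the boundary terms $k\in\{0,n\}$ genuinely cancel (which is what allows the sum defining $N_n$ to range only over $1\le k\le n-1$). A more structural route through the Chebyshev identities recorded in Section~\ref{sec+poly+RZ} is conceivable, but the direct binomial route outlined above appears to be the shortest path to \eqref{N-formula}.
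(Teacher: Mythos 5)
Your proof is correct: the binomial expansions, the vanishing of the boundary terms $k\in\{0,n\}$, the ratio $\binom{2n}{2k}=\binom{2n+2}{2k+1}\tfrac{(2k+1)(2n+1-2k)}{(2n+2)(2n+1)}$, and the simplification $(2k+1)(2n+1-2k)-(2n+1)=4k(n-k)$ all check out, and the resulting coefficient is exactly $\tfrac{k(n-k)}{4n+2}\binom{2n+2}{2k+1}$. Your route is, however, genuinely different from the paper's. The paper works in the opposite direction: starting from the odd part $\sum_k\binom{2n+2}{2k+1}x^{2k+1}=\tfrac12\bigl((1+x)^{2n+2}-(1-x)^{2n+2}\bigr)$, it differentiates once and twice to manufacture closed forms for $\sum_k 2k\binom{2n+2}{2k+1}x^{2k}$ and $\sum_k k^2\binom{2n+2}{2k+1}x^{2k-1}$, and then assembles $k(n-k)=nk-k^2$ from these to arrive at \eqref{N-formula}. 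That derivation is constructive --- it shows how the closed form is discovered --- at the cost of tracking several intermediate identities. Your argument instead takes the closed form as given, expands its right-hand side, and verifies the coefficient of each $x^{2k}$ by a single factorial simplification; it is shorter and entirely differentiation-free, but purely a verification. Both are complete proofs, and your observation that the $k=0$ and $k=n$ coefficients cancel (matching the truncated range $1\le k\le n-1$ in \eqref{poly+exp+formula}) is exactly the bookkeeping point that needs to be made explicit in the coefficient-comparison approach.
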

\begin{proof}
Although this identity can be verified by Maple using
a mechanical procedure~\cite{PWZ96}, we provide an alternative proof by manipulating binomial formulas.
From the  binomial identity
$\sum_{k=0}^{2n+2}\binom{2n+2}{k}x^k=(1+x)^{2n+2}$,
we derive
\begin{align}
    \sum_{k=0}^{n+1}\binom{2n+2}{2k}x^{2k}
 &= \frac{1}{2}\biggl((1+x)^{2n+2}+(1-x)^{2n+2}\biggr),\label{even}\\
    \sum_{k=0}^{n}\binom{2n+2}{2k+1}x^{2k+1}
 &= \frac{1}{2}\biggl((1+x)^{2n+2}-(1-x)^{2n+2}\biggr).\label{odd}
\end{align}
Differentiating \eqref{odd} we get
\begin{align}\label{diff1}
\sum_{k=0}^{n} (2k+1)\binom{2n+2}{2k+1}x^{2k}=(n+1)\biggl((1+x)^{2n+1}+(1-x)^{2n+1}\biggr).
\end{align}
Subtracting \eqref{odd} divided by $x$ from \eqref{diff1} we get
\begin{align}
   \sum_{k=1}^{n} 2k\binom{2n+2}{2k+1}x^{2k}
 = &(n+1)\biggl((1+x)^{2n+1}+(1-x)^{2n+1}\biggr)\nonumber\\
   &-\frac{1}{2x}\biggl((1+x)^{2n+2}-(1-x)^{2n+2}\biggr).\label{diff-difference}
\end{align}

Differentiating \eqref{diff1} yields
\begin{align}\label{diff2}
   \sum_{k=1}^{n} (2k+1)(2k)\binom{2n+2}{2k+1}x^{2k-1}
 = (n+1)(2n+1)\biggl((1+x)^{2n}-(1-x)^{2n}\biggr).
\end{align}
It follows from the last two identities that
\begin{multline}
   \sum_{k=1}^{n} k^2\binom{2n+2}{2k+1}x^{2k-1}
   = \frac{(n+1)(2n+1)}{4}\biggl((1+x)^{2n}-(1-x)^{2n}\biggr) \\
   -\frac{(n+1)}{4x}\biggl((1+x)^{2n+1}+(1-x)^{2n+1}\biggr) 
   +\frac{1}{8x^2}\biggl((1+x)^{2n+2}-(1-x)^{2n+2}\biggr).  \label{k-square}
\end{multline}
Combining \eqref{diff-difference} and \eqref{k-square} we get~\eqref{N-formula}.
\end{proof}

By Proposition \ref{prop+iden+minuscule}, the following result is immediate and the proof is left to the reader.

\begin{prop}
We have
$$
\sum_{n = 0}^{\infty} N_n(x^2)\frac{z^n}{n!}
=
\frac{2x(1+x)^2z-1-x^2}{16x}e^{(1+x)^2z}
+
\frac{2x(1-x)^2z+1+x^2}{16x}e^{(1-x)^2z}.
$$

\end{prop}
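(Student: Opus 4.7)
The plan is to substitute the closed form from Proposition~\ref{prop+iden+minuscule} directly into the exponential generating function and collect terms, using only the two elementary identities
\[
\sum_{n\ge 0} a^n \frac{z^n}{n!}=e^{az}, \qquad \sum_{n\ge 0}(n+1)a^n\frac{z^n}{n!}=(1+az)e^{az},
\]
applied with $a=(1+x)^2$ and $a=(1-x)^2$. No new combinatorial or analytic input is required beyond Proposition~\ref{prop+iden+minuscule}.

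I would split the right-hand side of Proposition~\ref{prop+iden+minuscule} into its two natural pieces and apply these identities term by term. The $(n+1)$ factor in the first piece yields $(1+(1\pm x)^2 z)e^{(1\pm x)^2 z}/8$, while the second piece yields $\mp(1\pm x)^2 e^{(1\pm x)^2 z}/(16x)$. The whole sum thus becomes a linear combination of $e^{(1+x)^2 z}$ and $e^{(1-x)^2 z}$ whose polynomial-in-$z$ coefficients I would read off and simplify. The coefficient of $e^{(1+x)^2 z}$ is
\[
\frac{1}{8}\bigl(1+(1+x)^2 z\bigr)-\frac{(1+x)^2}{16x}=\frac{2x(1+x)^2 z+2x-(1+x)^2}{16x},
\]
and the algebraic identity $2x-(1+x)^2=-1-x^2$ rewrites this as the announced $\dfrac{2x(1+x)^2 z-1-x^2}{16x}$. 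The same calculation with $x\mapsto -x$, together with $2x+(1-x)^2=1+x^2$, gives the coefficient $\dfrac{2x(1-x)^2 z+1+x^2}{16x}$ of $e^{(1-x)^2 z}$.

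Because every step is routine manipulation of power series, there is really no obstacle here; the only point deserving a moment of care is keeping the $1/x$ factor and the minus sign in the second term of $N_n(x^2)$ properly tracked through the summation, so that the two simplifying identities $2x\mp(1\pm x)^2=\mp(1+x^2)$ can be invoked cleanly at the end to produce the stated form.
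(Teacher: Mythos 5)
Your proof is correct and is exactly the route the paper intends: the paper states that the result is immediate from Proposition~\ref{prop+iden+minuscule} and leaves the verification to the reader, and your substitution of that closed form into the exponential generating function, using $\sum_{n\ge0}(n+1)a^nz^n/n!=(1+az)e^{az}$ and the simplifications $2x\mp(1\pm x)^2=\mp(1+x^2)$, carries out that verification correctly.
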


Setting $x=1$ in \eqref{N-formula} yields the following result,
which is Conjecture \ref{conj+poly+iden}.
\begin{coro}\label{coro+iden+minuscule}
For all positive integers $n$, we have
\begin{equation}\label{iden+conjecture}
N_n(1)=(n-1)2^{2n-3}.
\end{equation}
\end{coro}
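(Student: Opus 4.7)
The plan is to obtain this corollary as an immediate specialization of the polynomial identity \eqref{N-formula} established in Proposition~\ref{prop+iden+minuscule}. Since I may assume that proposition, this is purely an evaluation: substitute $x=1$ on both sides and simplify.

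Concretely, setting $x=1$ in the right-hand side of \eqref{N-formula}, every occurrence of $(1-x)^{2n}$ and $(1-x)^{2n+2}$ vanishes, and the remaining expression becomes
\[
N_n(1)=\frac{n+1}{8}\cdot 2^{2n}-\frac{1}{16}\cdot 2^{2n+2}.
\]
Factoring $2^{2n}$ out yields
\[
N_n(1)=2^{2n}\left(\frac{n+1}{8}-\frac{1}{4}\right)=2^{2n}\cdot\frac{n-1}{8}=(n-1)2^{2n-3},
\]
which is exactly the claimed identity.

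There is essentially no obstacle here beyond bookkeeping the powers of two; all the real work has already been done in Proposition~\ref{prop+iden+minuscule}. The only minor subtlety worth noting is that for $n=1$ both sides equal $0$ (consistent with $N_1(x)=0$ from the explicit table), so the identity holds for every positive integer $n$ without any edge-case adjustment.
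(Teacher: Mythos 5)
Your proposal is correct and is exactly the paper's argument: the paper also obtains the corollary by setting $x=1$ in \eqref{N-formula}, where the $(1-x)$ terms vanish and the powers of two simplify to $(n-1)2^{2n-3}$. Nothing further is needed.
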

%\begin{rem}

Let  $2^{[n]}$ denote the \emph{power set} of $[n]$.
For any pair $(X,Y)\in 2^{[n]}\times2^{[n]}$ every element of $[n]$ is exactly in one of the subsets in
$\{X\setminus Y, Y\setminus X, X\cap Y, \overline{X\cup Y}\}$, thus
\begin{subequations}
\begin{equation}\label{eq:refine+Haye}
\sum_{(X,Y) \in 2^{[n]}\times2^{[n]}}a^{|X\setminus Y|}b^{ |Y\setminus X|}c^{| X\cap Y|}d^{| \overline{X\cup Y}|}=(a+b+c+d)^n.
\end{equation}
Setting $a=b=z$ and $c=d=1$ we get
\begin{equation}\label{eq:ref-Haye}
\sum_{(X,Y) \in 2^{[n]}\times2^{[n]}}z^{|X\Theta Y|} = 2^n(1+z)^n,
\end{equation}
where $X\Theta Y:=(X \cup Y)\setminus (X \cap Y)$ is the symmetric difference.
Differentiating and setting $z=1$ yields a known result, see \cite[(26)]{LH09} and \cite[A002699]{slo},
\begin{equation}\label{eq:Haye}
S(n):=\sum_{(X,Y) \in 2^{[n]}\times2^{[n]}}|X\Theta Y| = n\cdot2^{2n-1}.
\end{equation}
\end{subequations}
Bourn and Erickson~\cite{BE23} asked for a combinatorial proof of \eqref{iden+conjecture}, i.e., $N_n(1)=S(n-1)$ using \eqref{eq:Haye}.
In this regard, it would be interesting  to find a combinatorial proof of \eqref{N-formula} in terms of $2^{[n]}\times 2^{[n]}$.
\subsection{Asymptotic normality}
A random variable $X$ is said to be {\it standard normal distributed} when
\begin{equation}\label{eq+normal+dis}
{\rm Prob} \{X \leq x\} = \frac{1}{\sqrt{2\pi}}\int_{-\infty}^{x}e^{-t^2/2}dt,
\end{equation}
denoted $X\sim \mathcal{N}(0,1)$.
Let $\mathcal{N}(x)$ be the function on the right of \eqref{eq+normal+dis}.

Let $(a(n,k))_{0\leq k\leq n}$ be a double-indexed sequence of non-negative numbers.
Recall \cite{Can15} that  the sequence $a(n,k)$ is \emph{asymptotically normal with mean $\mu_n$ and variance $\sigma_n^2$} 
provided that for the normalized probabilities
\[
p(n,k):=\frac{a(n,k)}{\sum_{k}a(n,k)}\qquad (0\leq k\leq n)
\]
we have, for each fixed $x\in \R$,
\[
\sum_{k \leq \mu_n+x\sigma_n}p(n,k) \to \mathcal{N}(x),\quad\text{as}\;n\to +\infty.
\]

The following classical result was firstly published by Bender \cite[Theorem 2]{Ben73}, although Harper \cite{Har67} already used this technique  to
show the asymptotic normality of the Stirling numbers of the second kind.
We refer the reader to the survey article \cite{Can15} and references therein for background on asymptotic normality in enumeration.

\begin{lem}\label{lem+criterion+asy+normal}
Let $A_n(x)=\sum_{k=0}^na(n,k)x^k$ be a monic polynomial whose zeros are all real and non-positive.
If $\displaystyle\lim_{n\to +\infty} \sigma_n=+\infty$,
then the numbers $a(n,k)$ are asymptotically normal with mean $\mu_n$ and variance $\sigma_n^2$.
In fact,
\begin{equation*}
\mu_n=\sum_{k=1}^n\frac{1}{1+r_k}
\quad \text{and}\quad
\sigma_n^2=\sum_{k=1}^n\frac{r_k}{(1+r_k)^2},
\end{equation*}
where $-r_k \leq 0$  for $1\leq k\leq n$ are the real zeros of $A_n(x)$.
\end{lem}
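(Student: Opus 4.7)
The plan is to prove the lemma via a probabilistic interpretation of the coefficients. Since $A_n(x)$ is monic with only real non-positive zeros, it factors as $A_n(x)=\prod_{k=1}^n(x+r_k)$ with $r_k\ge 0$. Dividing by $A_n(1)=\prod_k(1+r_k)$ yields
$$
\frac{A_n(x)}{A_n(1)}=\prod_{k=1}^n\left(\frac{r_k}{1+r_k}+\frac{1}{1+r_k}\,x\right),
$$
which is exactly the probability generating function of a sum $S_n=Y_1+\cdots+Y_n$ of independent Bernoulli random variables with $\mathbb{P}(Y_k=1)=1/(1+r_k)$. Consequently $p(n,k)=\mathbb{P}(S_n=k)$.

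First I would read off the mean and variance from independence: $\mathbb{E}(S_n)=\sum_k 1/(1+r_k)=\mu_n$ and $\mathrm{Var}(S_n)=\sum_k r_k/(1+r_k)^2=\sigma_n^2$, confirming the formulas in the statement. The conclusion then reduces to showing that the standardized variable $(S_n-\mu_n)/\sigma_n$ converges in distribution to $\mathcal{N}(0,1)$.

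Next I would invoke the Lindeberg--Feller central limit theorem for triangular arrays applied to the summands $(Y_k-\mathbb{E}Y_k)/\sigma_n$. Because each $Y_k$ is $\{0,1\}$-valued, one has $\mathrm{Var}(Y_k)\le 1/4$ and $|Y_k-\mathbb{E}Y_k|\le 1$, so
$$
\max_{1\le k\le n}\frac{|Y_k-\mathbb{E}Y_k|}{\sigma_n}\;\le\;\frac{1}{\sigma_n}\;\longrightarrow\;0
$$
by the hypothesis $\sigma_n\to+\infty$. Hence for every $\varepsilon>0$ and $n$ sufficiently large the summands are deterministically bounded by $\varepsilon$, so the Lindeberg sum vanishes identically and Feller's condition $\max_k \mathrm{Var}(Y_k)/\sigma_n^2\to 0$ holds as well. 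The CLT then gives $(S_n-\mu_n)/\sigma_n\Rightarrow \mathcal{N}(0,1)$, and since the limiting distribution function $\mathcal{N}$ is continuous, convergence of the distribution functions is pointwise at every $x\in\R$, which rephrased in terms of the partial sums $\sum_{k\le \mu_n+x\sigma_n}p(n,k)$ is precisely the asymptotic normality claim.

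The main obstacle is conceptual rather than analytic: one must recognize that real-rootedness of $A_n(x)$ secretly turns the coefficient sequence into the law of a sum of independent Bernoullis. Once this identification is made, the verification of Lindeberg is a one-line consequence of the boundedness of the summands together with $\sigma_n\to\infty$, and no finer tools such as characteristic functions, Esseen-type bounds, or saddle-point asymptotics are required. A minor care point is that the hypothesis $\sigma_n\to\infty$ is essential: without it the individual Bernoulli variances could concentrate on a single index and the Gaussian limit would fail.
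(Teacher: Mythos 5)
Your proof is correct. Note, however, that the paper does not prove this lemma at all: it is quoted as a classical result of Bender and Harper with a citation, so there is no in-paper argument to compare against. Your reconstruction --- factoring the monic real-rooted $A_n(x)$ as $\prod_k(x+r_k)$, recognizing $A_n(x)/A_n(1)$ as the probability generating function of a sum of independent Bernoulli variables with parameters $1/(1+r_k)$, reading off $\mu_n$ and $\sigma_n^2$ by independence, and closing with Lindeberg--Feller (trivially verified since the centered summands are bounded by $1/\sigma_n\to 0$) --- is exactly the standard Harper-style proof found in the cited sources, and every step checks out, including the passage from convergence in distribution to pointwise convergence of the distribution functions via continuity of $\mathcal{N}(x)$.
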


\begin{thm}
The distribution of the coefficients of $N_{n}(x)$ defined in \eqref{poly+exp+formula} is asymptotically normal as $n \to +\infty$
with mean $\mu_n=n/2$ and variance
\begin{equation}\label{variance}
\sigma_n^2=\frac{n^2-n-2}{8(n-1)}.
\end{equation}
\end{thm}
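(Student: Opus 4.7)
The strategy is to apply Bender's criterion, Lemma~\ref{lem+criterion+asy+normal}. Real-rootedness of $N_n(x)$ is already guaranteed by Theorem~\ref{thm+minu+poly+RZ}, and because its coefficients are non-negative all zeros are non-positive; dividing by the leading coefficient---which does not affect the normalised probabilities $p(n,k)$---produces a monic polynomial satisfying the hypotheses of the lemma. The work therefore reduces to computing $\mu_n$ and $\sigma_n^2$ in closed form and to checking that $\sigma_n\to\infty$.

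For the mean, the palindromicity of $N_n(x)$ about $k=n/2$ (noted right after \eqref{gamma:N}) immediately yields $\mu_n=N_n'(1)/N_n(1)=n/2$; equivalently, in a factorisation $N_n(x)=c\prod_k(x+r_k)$ the positive roots come in reciprocal pairs $\{r,1/r\}$ (along with the root $r=0$ coming from $N_{n,0}=0$), and $\frac{1}{1+r}+\frac{1}{1+1/r}=1$ for each such pair.

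For the variance I would use the factorisation identity
\[
\sigma_n^2=\frac{N_n'(1)+N_n''(1)}{N_n(1)}-\mu_n^2
\]
and extract $N_n'(1)$ and $N_n''(1)$ by differentiating the identity of Proposition~\ref{prop+iden+minuscule}. Setting $F(x):=N_n(x^2)$ so that $F'(1)=2N_n'(1)$ and $F''(1)=2N_n'(1)+4N_n''(1)$, the right-hand side of \eqref{N-formula} differentiates in a routine way: the factor $1/(16x)$ is harmless at $x=1$, while the pieces $(1-x)^{2n}$ and $(1-x)^{2n+2}$ vanish with enough multiplicity that only a handful of terms contribute. Combining the resulting expressions with Corollary~\ref{coro+iden+minuscule} delivers the stated $\sigma_n^2=(n^2-n-2)/(8(n-1))=(n-2)(n+1)/(8(n-1))$, which is asymptotic to $n/8$ and hence tends to infinity. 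Lemma~\ref{lem+criterion+asy+normal} then concludes the proof. The only real obstacle is the algebraic bookkeeping in the second-derivative step; no new idea beyond Theorem~\ref{thm+minu+poly+RZ}, Proposition~\ref{prop+iden+minuscule}, and Lemma~\ref{lem+criterion+asy+normal} is required.
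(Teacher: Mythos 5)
Your proposal is correct and follows essentially the same route as the paper: Bender's criterion (Lemma~\ref{lem+criterion+asy+normal}) combined with the real-rootedness from Theorem~\ref{thm+minu+poly+RZ}, palindromicity for $\mu_n=n/2$, and evaluation of the first two derivatives of $N_n(x^2)$ at $x=1$ via Proposition~\ref{prop+iden+minuscule} and Corollary~\ref{coro+iden+minuscule}. The paper merely packages the variance computation through the logarithmic derivative of $N_n(x^2)$ and the root expression $\sum_k r_k/(1+r_k)^2$ rather than your equivalent moment identity $\sigma_n^2=(N_n'(1)+N_n''(1))/N_n(1)-\mu_n^2$.
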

\begin{proof}
Let $p(n,k)=N_{n,k}/N_n(1)$   for $1\leq k\leq n-1$, then $\sum_{k=1}^{n-1}p(n,k)=1$.
By the palindromicity of $N_{n,k}$, we have $p(n,k)=p(n,n-k)$.
By the definition of mean,
\begin{equation}\label{double mean}
\mu_n = \sum_{k=1}^{n-1}k \cdot p(n,k)
       = \frac{1}{2}\sum_{k=1}^{n-1} (k+n-k)\cdot p(n,k)
       = \frac{n}{2}.
\end{equation}
On the other hand,
\begin{equation*}
\mu_n = \sum_{k=1}^{n-1}k\cdot p(n,k)
      = \frac{\sum_{k=1}^{n-1}kN_{n,k}}{N_n(1)}
      = \frac{N_n^{'}(1)}{N_n(1)}.
      %= \sum_{k=1}^{n-1}\frac{1}{1+r_k}.
\end{equation*}
Combining the  above formula  with \eqref{double mean} results in that
\begin{equation}\label{iden+val+mean}
\frac{(N_n(x^2))^{'}}{N_n(x^2)}\bigg|_{x=1} = 2\frac{N_n^{'}(1)}{N_n(1)}=n.
\end{equation}
If  $r_k$ are the negatives of the (real) zeros of $N_n(x)$ with
 $k\in [n-1]$, then $N_n(x)=2\binom{n+1}{3}\prod_{k=1}^{n-1}(x+r_k)$. By Lemma \ref{lem+criterion+asy+normal}, the variance $\sigma_n^2$ is given by
\begin{equation}\label{iden+val+variance}
\sigma_n^2=\sum_{k=1}^{n-1} (k-\mu_n)^2\cdot p(n,k)=\sum_{k=1}^{n-1}\frac{r_k}{(1+r_k)^2}.
\end{equation}
It is easy to verify that
\begin{align}\label{iden+variance}
   \sum_{k=1}^{n-1}\frac{4xr_k}{(x^2+r_k)^2}
 &=\frac{(N_n(x^2))^{'}}{N_n(x^2)}+x\left(\frac{(N_n(x^2))^{'}}{N_n(x^2)}\right)^{'} \nonumber \\
 &=\frac{(N_n(x^2))^{'}}{N_n(x^2)}-x\left(\frac{(N_n(x^2))^{'}}{N_n(x^2)}\right)^2+x\frac{(N_n(x^2))^{''}}{N_n(x^2)}.
\end{align}
Using the expression of $N_n(x^2)$ in Proposition \ref{prop+iden+minuscule}, for $n \geq 2$, we have (by Maple)
\begin{equation}\label{val+two+diff}
(N_n(x^2))^{''}|_{x=1}= \left(2 n^{3}-3 n^{2}+n -2\right)2^{2n-4}.
\end{equation}
Combining \eqref{iden+val+mean}--\eqref{val+two+diff} and Corollary \ref{coro+iden+minuscule}, we have
\begin{align*}
   4\sigma_n^2
 &=\left[\frac{(N_n(x^2))^{'}}{N_n(x^2)}-x\left(\frac{(N_n(x^2))^{'}}{N_n(x^2)}\right)^2+x\frac{(N_n(x^2))^{''}}{N_n(x^2)}\right]\bigg|_{x=1}\\
 &=n-n^2+\frac{2^{2n-4}\left(2 n^{3}-3 n^{2}+n -2\right)}{2^{2n-3}(n-1)}\\
 &=\frac{n^2-n-2}{2(n-1)},
\end{align*}
which implies \eqref{variance} and that $\displaystyle\lim_{n\to +\infty}\sigma_n=+\infty$.
 The proof is complete by Lemma \ref{lem+criterion+asy+normal}.
\end{proof}

\section{$x$-log-concavity of polynomials}\label{sec+log+concave}

A polynomial sequence $(f(x))_{n \geq 0}$ is said to be $x$-\emph{log-concave} if the polynomial
$$
f^2_n(x)-f_{n+1}(x)f_{n-1}(x)
$$
has only non-negative coefficients for each $n \in \mathbb{N}^+$.
The definition of $x$-log-concavity is introduced by Stanley \cite{Sag92}.
The corresponding definition of $x$-log-convexity of a polynomial sequence is that all coefficients of the polynomial $f_{n+1}(x)f_{n-1}(x)-f^2_n(x)$ are non-negative for each $n$.
The properties $x$-log-concavity and $x$-log-convexity have received much attention in combinatorics.

The first few values of $D_n(x):=N^2_n(x)-N_{n+1}(x)N_{n-1}(x)$ ($n\geq 2$)
are listed as follows:
\begin{align*}
D_2(x)&=4x^2,\\
D_3(x)&=24x^2+16x^3+24x^4,\\
D_4(x)&=80x^2+192x^3+480x^4+192x^5+80x^6,\\
D_5(x)&=200x^2+1040x^3+4280x^4+5344x^5+4280x^6+1040x^7+200x^8.
\end{align*}
Then, we have the following result.
%%%%%%%%%%%%%%%%%%%%%%%%
\begin{thm}
The polynomial sequence $(N_n(x))_{n\geq 1}$ is $x$-log-concave.
\end{thm}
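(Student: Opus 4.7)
The plan is to obtain an explicit factorization of $D_n(x)$ from which non-negativity of its coefficients is manifest, using Proposition~\ref{prop+iden+minuscule}. Set $P_n:=(1+x)^{2n}+(1-x)^{2n}$ and $Q_n:=((1+x)^{2n+2}-(1-x)^{2n+2})/(4x)$, so that Proposition~\ref{prop+iden+minuscule} reads $N_n(x^2)=\frac{n+1}{8}P_n-\frac{1}{4}Q_n$. Both sequences satisfy the three-term recurrence $R_{n+1}=2(1+x^2)R_n-(1-x^2)^2R_{n-1}$, which yields the Catalan-type identities
\[
P_n^2-P_{n+1}P_{n-1}=-16x^2(1-x^2)^{2n-2},\qquad Q_n^2-Q_{n+1}Q_{n-1}=(1-x^2)^{2n},
\]
together with $P_{n+1}Q_{n-1}-P_nQ_n=-2(1-x^2)^{2n}$ and $P_{n-1}Q_{n+1}-P_nQ_n=2(1-x^2)^{2n-2}(1+6x^2+x^4)$. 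Substituting into $D_n(x^2)=N_n(x^2)^2-N_{n+1}(x^2)N_{n-1}(x^2)$ and collecting (the $P_nQ_n$ cross-terms cancel thanks to $2(n+1)=(n+2)+n$), one obtains after routine algebra
\[
64\,D_n(x^2)=P_n^2-4(1-x^2)^{2n-2}\bigl(1+(4n^2-2)x^2+x^4\bigr).
\]

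The pivotal step is to rewrite this through $C_n(y):=\sum_{k=0}^{n-1}\binom{2n}{2k+1}y^k$. Squaring the identity $(1+x)^{2n}-(1-x)^{2n}=2xC_n(x^2)$ yields $P_n^2=4(1-x^2)^{2n}+4x^2C_n(x^2)^2$; combined with the algebraic observation $1+(4n^2-2)y+y^2=(1-y)^2+4n^2y$ (with $y=x^2$), everything collapses to the clean factorization
\[
16\,D_n(y)=y\bigl[C_n(y)-2n(1-y)^{n-1}\bigr]\bigl[C_n(y)+2n(1-y)^{n-1}\bigr].
\]

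It therefore suffices to show that both factors have non-negative coefficients in $y$. Comparing the $y^k$-coefficient of each factor, this reduces to the single binomial inequality
\[
\binom{2n}{2k+1}\geq 2n\binom{n-1}{k}\qquad(0\leq k\leq n-1).
\]
I would prove it by a ratio argument: setting $S_k:=\binom{2n-1}{2k}/\bigl((2k+1)\binom{n-1}{k}\bigr)$ (so that the inequality is equivalent to $S_k\geq 1$), a direct computation gives $S_0=S_{n-1}=1$ and $S_{k+1}/S_k=(2n-1-2k)/(2k+3)$, a ratio that crosses $1$ at $k=(n-2)/2$. Hence $S_k$ is unimodal on $\{0,1,\ldots,n-1\}$ with endpoint minimum $1$, so $S_k\geq 1$ throughout.

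The main obstacle will be discovering the pair of identities that unlocks the factorization: neither the replacement of $P_n^2$ by $C_n^2$ nor the rewriting $1+(4n^2-2)y+y^2=(1-y)^2+4n^2y$ is suggested by the problem, yet together they turn the intractable expression for $64D_n$ into a difference of squares. The binomial inequality itself is tight at the endpoints $k=0$ and $k=n-1$, so crude estimates (for instance, Vandermonde applied to a single term) do not suffice; the ratio analysis is the correct sharpened tool.
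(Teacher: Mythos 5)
Your proposal is correct, and I verified each of the claimed identities: with $u=(1+x)^2$, $v=(1-x)^2$ one has $u+v=2(1+x^2)$, $uv=(1-x^2)^2$, $u-v=4x$, and your four Catalan-type identities follow from $P_n=u^n+v^n$, $Q_n=(u^{n+1}-v^{n+1})/(u-v)$; the $P_nQ_n$ cross-terms do cancel, the intermediate formula $64D_n(x^2)=P_n^2-4(1-x^2)^{2n-2}(1+(4n^2-2)x^2+x^4)$ agrees (after the shift $n\to n+1$) with the expression the paper derives, and the final factorization $16D_n(y)=y\bigl[C_n(y)-2n(1-y)^{n-1}\bigr]\bigl[C_n(y)+2n(1-y)^{n-1}\bigr]$ checks out on $n=2,3$. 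The concluding inequality $\binom{2n}{2k+1}\geq 2n\binom{n-1}{k}$, with equality at $k=0$ and $k=n-1$, is correctly handled by your ratio computation $S_{k+1}/S_k=(2n-1-2k)/(2k+3)$ and the unimodality-with-equal-endpoints argument. This is a genuinely different route from the paper: the paper stops at the analogue of your intermediate formula, expands $D_{n+1}(x)$ coefficient by coefficient, and runs a two-case parity analysis requiring estimates $E(n,k)$, $F(n,k)$ and a delicate monotonicity study of auxiliary cubics $p_1(k)$, $p_2(k)$. Your difference-of-squares factorization replaces all of that with a single sharp binomial inequality, and it buys more than the theorem asks for: it exhibits $D_n(y)/y$ explicitly as a product of two polynomials with non-negative coefficients, which is structural information the paper's coefficientwise argument does not provide (and which could be relevant to the paper's Conjecture on weak Hurwitz stability of $D_n$). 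The only caveat is that several steps are labelled ``routine algebra''; they are indeed routine and all correct, but a final write-up should display the verification of at least the four product identities and the collection step.
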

\begin{proof}
By Proposition \ref{prop+iden+minuscule} it follows that
\begin{align*}
&\quad
N^2_{n+1}(x^2)-N_{n+2}(x^2)N_n(x^2) \\
&=\frac{1}{64}\biggl[(1+x)^{4n+4}+(1-x)^{4n+4}-2\biggl(1+(8n^2+16n+6)x^2+x^4\biggr)(1-x^2)^{2n}\biggr],
\end{align*}
whose verification is left to the  reader.
Therefore,
\begin{align*}
	D_{n+1}(x)
	=\frac{1}{32}\sum_{k=0}^{2n+2}\binom{4n+4}{2k}x^k-\frac{1}{32}\biggl(1+(8n^2+16n+6)x+x^2\biggr)(1-x)^{2n}.
\end{align*}
Note that the degree of $D_{n+1}(x)$ is $2n$ and $D_{n+1}(0)=0$. Let
\[
D_{n+1}(x)=\frac{1}{32}\sum_{k=1}^{2n}D_{n,k}x^k.
\]
Then,
\[
D_{n,k}=\binom{4n+4}{2k}-(-1)^k\binom{2n}{k}-(-1)^{k}\binom{2n}{k-2}+(-1)^{k}(8n^2+16n+6)\binom{2n}{k-1}.
\]
We now proceed to show that $D_{n,k}\geq 0$ by the parity of $k$. 
Let $(a)_n=a(a+1)\cdots (a+n-1)$ for $n\geq 1$ and $(a)_0=1$.
\begin{itemize}
\item  
For $1 \leq k \leq n$ we have
\begin{align}\label{ineq+even+term1}
\frac{(2k)!(2n-2k+2)!}{(2n)!}D_{n,2k}
&= \frac{(2n+1)_{2n+4}}{(2k+1)_{2k}\cdot (2n-2k+3)_{2n-2k+2}}+ E(n,k)\nonumber\\
&\geq E(n,k),
\end{align}
where
$E(n,k)=2k(2n-2k+2)(8n^2+16n+6) -2k(2k-1)-(2n-2k+2)(2n-2k+1)$. We notice that
\begin{align*}
E(n+k,k)&=32 k^{3} n +64 k^{2} n^{2}+32 k \,n^{3}+32 k^{3}+128 k^{2} n\\
& \qquad +96 k \,n^{2}+60 k^{2}+88 n k+26 k -4 n^{2} -6 n -2,
\end{align*}
which is clearly non-negative for $n\geq 0$ and $k\geq 1$.
\item 
For $1 \leq k \leq n-1$ we have
\begin{align}
\label{ineq+even+term2}
\frac{(2k+1)!(2n-2k+1)!}{(2n)!}&D_{n,2k+1}\nonumber\\
&= \frac{(2n+1)_{2n+4}}{(2k+2)_{2k+1}\cdot (2n-2k+2)_{2n-2k+1}}-F(n,k),
\end{align}
where
$F(n,k)=32(n+1)^2(n-k)k+2(n+1)(2n+1)(4n+3)$. 
We now proceed to to prove the non-negativity of the right-hand side of
\eqref{ineq+even+term2}, i.e.,
\begin{align}\label{ineq+odd+term+equi}
(2k+2)_{2k+1}\cdot (2n-2k+2)_{2n-2k+1} F(n,k) \leq (2n+1)_{2n+4}.
\end{align}
Let $g(k):=(2k+2)_{2k+1}\cdot (2n-2k+2)_{2n-2k+1}$ and 
\begin{equation}
h(k):=F(n,k)\cdot g(k)
\end{equation}
We now show that
the function $h(k)$ is bounded
by  the right-hand side of \eqref{ineq+odd+term+equi} for  $k \in [1,n-1]$.  Firstly, for $k=1$ we have
\begin{gather}
(2n+1)_{2n+4}-h(1)=
1024(n-1)n(n+1)^2(n+2)(4n-3)(2n+1)_{2n-2},
\end{gather}
which is non-negative when $n \geq 1$.
As  $h(k)=h(n-k)$ for $k \in [1,n-1]$,
it suffices to  prove that
$h(k)$ is decreasing on  $[1, \;n/2]$,
i.e., $h(k)-h(k+1) \geq 0$ for $1 \leq k \leq n/2-1$.

By the definition of $h(k)$, it is not difficult  to get the formula
\begin{align}\label{eq+diff+h}
  h(k)-h(k+1)=\frac{g(k+1)}{(4k+3)(4k+5)}\cdot32 \left[p_1(k)+(n+1)^2 p_2(k)\right],
\end{align}
where
\begin{align*}
p_1(k)&:=(n+1)(n-2k-1)(n+1)(2n+1)(4n+3),  \\
p_2(k)&:=32(n+2)k^3-48(n+2)(n-1)k^2+(16n^3-48n+62)k-15n+15.
\end{align*}
Obviously, the function $p_1(k)$ is non-negative if $1 \leq k \leq n/2-1$.
For $p_2(k)$, we have
$$
p_2^{'}(k)=96(n+2)\left[k^2-(n-1)k\right]+16n^3-48n+62.
$$
Thus, the function $p_2^{'}(k)$ in $k$ goes upwards and the symmetric axis is $(n-1)/2$.
Note that the function $p_2^{'}(k)$ takes the values
$$
p_2^{'}(1)=16n^3-96n^2-48n+446 \geq 0
$$
when $n \geq 6$, and
$$
p_2^{'}(n/2-1)=-8n^3+48n+62 \leq 0
$$
when $n \geq 3$.
Following the discussion above,
the function $p_2(k)$ firstly increases and then decreases when $1 \leq k \leq n/2-1$.
Moreover, we get that
$$
p_2(1)=(n-3)(16n^2-79)\quad \text{and} \quad p_2(n/2-1)=4n^3 - 16n - 15,
$$
which are non-negative when $n \geq 3$.
Therefore, the function $p_2(k)$ is non-negative when $1 \leq k \leq n/2-1$.
\end{itemize}
Thus, the proof is complete.
\end{proof}

Recall that a polynomial $f(x)$ is \emph{weakly Hurwitz stable} if $f(x)$ is non-vanishing or identically zero when $\Re(x) > 0$, namely all zeros of $f(x)$ lie on the closed left half-plane.
It is easy to see that the weak Hurwitz stability of polynomial $f(x) \in \mathbb{R}[x]$ implies that all coefficients of $f(x)$ have same sign.
That is, the weak Hurwitz stability is stronger than $x$-log-concavity in a sense.
Using mathematical software Matlab,
we have computed that the polynomial $N^2_{n+1}(x)-N_{n+2}(x)N_n(x)$ is weakly Hurwitz stable for $n \leq 50$.
Based on this, it is reasonable to propose the following conjecture.
\begin{conj}
The polynomial $N^2_{n+1}(x)-N_{n+2}(x)N_n(x)$ is weakly Hurwitz stable for each positive integer $n$.
\end{conj}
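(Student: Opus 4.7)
The plan is to work with the closed-form expression for $D_{n+1}(y):=N_{n+1}^{\,2}(y)-N_{n+2}(y)N_n(y)$ obtained in the proof of the preceding theorem, namely
\[
D_{n+1}(y)=\frac{1}{32}\sum_{k=0}^{2n+2}\binom{4n+4}{2k}y^{k}-\frac{1}{32}\bigl(1+(8n^{2}+16n+6)y+y^{2}\bigr)(1-y)^{2n}.
\]
A direct expansion shows that both the constant term and the coefficient of $y$ vanish, so $y^{2}\mid D_{n+1}(y)$ and I may write $D_{n+1}(y)=y^{2}\tilde D_{n+1}(y)$ with $\tilde D_{n+1}$ palindromic of degree $2n-2$. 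Since the removed factor $y^{2}$ contributes only the boundary zero $y=0$, the conjecture reduces to the weak Hurwitz stability of $\tilde D_{n+1}(y)$.

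The main tool I would bring to bear is the Hermite--Biehler theorem, which characterises weak Hurwitz stability of $\tilde D_{n+1}(y)$ via the interlacing of the real non-positive zeros of its even and odd parts $E_{n}(y^{2})$ and $yO_{n}(y^{2})$. To exploit the palindromic symmetry I would pass to the Cayley variable $y=(1+z)/(1-z)$. Because the zeros of $\tilde D_{n+1}$ occur in reciprocal pairs $\alpha,1/\alpha$ and the substitution sends $1/\alpha\mapsto -z_{\alpha}$, the image
\[
q(z)=(1-z)^{2n-2}\,\tilde D_{n+1}\!\left(\tfrac{1+z}{1-z}\right)
\]
is an even polynomial in $z$, hence $q(z)=r(z^{2})$ for some $r$ of degree $n-1$. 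The Cayley map sends $\{\Re y\le 0\}$ onto $\{|z|\ge 1\}$, so the conjecture becomes the assertion that every zero of $r$ lies in $\{|w|\ge 1\}$, i.e.\ Schur stability of the reciprocal polynomial $w^{n-1}r(1/w)$. A significant algebraic bonus is that $(1-y)^{2n}=(-2z)^{2n}/(1-z)^{2n}$ under the substitution, so the awkward second summand of $D_{n+1}(y)$ collapses into a monomial in $z$ times a fixed quadratic, giving one hope for a clean closed form for $r(w)$.

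From here I would seek a product factorisation of $r(w)$ in the spirit of the Chebyshev factorisation \eqref{chebyshev+poly+2n+1+AB} that underlies Theorem~\ref{thm+minu+poly+RZ}, or apply the Schur--Cohn algorithm to $w^{n-1}r(1/w)$. The principal obstacle is that weak Hurwitz stability is a \emph{global} property of the zero distribution, unlike the coefficient-wise inequalities that sufficed for the $x$-log-concavity proof. The explicit formula for $D_{n+1}(y)$ already involves delicate cancellations at the top---both $y^{2n+2}$ and $y^{2n+1}$ coefficients must vanish for the degree to drop to $2n$---so a purely term-wise argument is unlikely to succeed. A promising complementary strategy is a Rouch\'e-type perturbation starting from the leading summand $\sum_{k}\binom{4n+4}{2k}y^{k}$, whose zeros are real and non-positive by \eqref{eq+dec+zero}; one would then track how far the subtracted term may deform the root set before any zero crosses the imaginary axis. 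The numerical evidence up to $n\le 50$ makes it plausible that one of these avenues will succeed.
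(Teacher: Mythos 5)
This statement is posed in the paper as an open conjecture: the authors offer no proof, only a Matlab verification of weak Hurwitz stability for $n \leq 50$. Your proposal, by your own admission, does not close that gap either. The preparatory reductions you carry out are correct and worthwhile: the closed form for $D_{n+1}$, the vanishing of the coefficients of $y^0$ and $y^1$ so that $D_{n+1}(y)=y^2\tilde D_{n+1}(y)$ with $\tilde D_{n+1}$ palindromic of degree $2n-2$, the evenness of $q(z)=(1-z)^{2n-2}\tilde D_{n+1}\bigl(\tfrac{1+z}{1-z}\bigr)$ coming from the reciprocal pairing of zeros, the identity $1-y=-2z/(1-z)$ collapsing the subtracted term, and the translation of weak Hurwitz stability into a Schur-type condition on $r(w)$. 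But the mathematical content of the conjecture is precisely the step you defer: showing that every zero of $r$ has modulus at least $1$ (equivalently, running Schur--Cohn, exhibiting a factorisation, or controlling a Rouch\'e perturbation). None of these is attempted beyond naming it, and each faces the obstacle you yourself identify --- the statement is a global constraint on the zero locus, not a coefficient inequality, and the subtracted term $\frac{1}{32}\bigl(1+(8n^{2}+16n+6)y+y^{2}\bigr)(1-y)^{2n}$ is not a small perturbation of $\frac{1}{32}\sum_k\binom{4n+4}{2k}y^k$ near $y=1$, where the latter's zeros do not accumulate but the former has a zero of order $2n$. So a Rouch\'e argument would need a carefully chosen contour and explicit lower bounds on the leading term along the imaginary axis, none of which is supplied.

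In short: what you have written is a reasonable research plan, with correct normalisations, but it contains no proof of the conjecture, and the paper contains none either. If you want to make progress, the most concrete of your suggestions is to compute $r(w)$ explicitly from the two summands (both transform cleanly under the Cayley map, the first via \eqref{eq+dec+zero} applied to $f_{2n+1}$, the second via $(1-y)^{2n}=(2z)^{2n}/(1-z)^{2n}$) and then run Schur--Cohn on the resulting explicit polynomial; until that computation is carried through and the resulting inequalities verified for all $n$, the statement remains a conjecture.
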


\section{Total positivity of the coefficient matrix}\label{sec+TP}

Let $M$ be a (finite or infinite) matrix of real numbers.
Matrix $M$ is called \emph{totally positive} ($\tp$) if all its minors are non-negative.
Let $(a_n)_{n \geq 0}$ be an infinite sequence of real numbers, and define its Toeplitz matrix as
\begin{eqnarray}
[a_{n-k}]_{n,k\geqslant0}=\left[
  \begin{array}{ccccccc}
    a_0 & \\
    a_1 & a_0 & & &\\
    a_2 & a_1 & a_0 &&\\
    a_3 & a_2 & a_1 & a_0&\\
\vdots &\vdots&\vdots&\vdots&\ddots \\
  \end{array}
\right].
\end{eqnarray}
Recall that $(a_n)_{n \geq 0}$ is said to be a \emph{P\'olya frequency} ($\pf$) sequence if its Toeplitz matrix is~$\tp$.
The following result is a classical representation theorem for $\pf$ sequences, see Karlin \cite[p.154]{Kar68}.
\begin{lem}[Schoenberg-Edrei Theorem]\label{lem+SE+PF}
A non-negative sequence $(a_n)_{n \geq 0}$ ($a_0 \neq 0$) is a $\pf$ sequence if and only if its generating function has the form
\begin{equation*}
\sum_{n=0}^{+\infty}a_nx^n=a_0 e^{\gamma x}\frac{\prod_{i=1}^{+\infty}(1+\alpha_ix)}{\prod_{i=1}^{+\infty}(1-\beta_ix)},
\end{equation*}
where $\alpha_i, \beta_i,\gamma \geq 0$ and $\sum_{i=1}^{+\infty}(\alpha_i+\beta_i) < +\infty$.
\end{lem}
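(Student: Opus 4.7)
The plan is to establish the two directions of the Schoenberg-Edrei theorem separately, as they differ enormously in difficulty.

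For the sufficiency direction, the strategy is to exploit the multiplicativity of the $\pf$ property. If $(a_n)$ and $(b_n)$ are non-negative sequences whose convolution is $(c_n)$, with $c_n=\sum_{k} a_k b_{n-k}$, then the Toeplitz matrix of $(c_n)$ equals the product of the Toeplitz matrices of $(a_n)$ and $(b_n)$; the Cauchy-Binet formula then shows that a product of $\tp$ matrices is $\tp$. It therefore suffices to verify that each elementary factor $e^{\gamma x}$, $1+\alpha x$, and $(1-\beta x)^{-1}$ generates a $\pf$ sequence. The second and third are essentially immediate: the Toeplitz matrix of $(1,\alpha,0,0,\ldots)$ is bidiagonal with trivial minor structure, and the geometric sequence $(\beta^n)$ can be shown $\pf$ by explicit row reduction (after subtracting $\beta$ times each row from the next, only the diagonal survives). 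For $e^{\gamma x}=\sum(\gamma x)^n/n!$ one computes a general minor of the Toeplitz matrix and recognises it, up to a factor of $\gamma^{\sum_i n_i}/\prod_i n_i!$, as a non-negative quantity, or alternatively invokes the classical fact that $(1/n!)_{n\geq 0}$ is $\pf$.

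The necessity direction is the substantial one and will be the main obstacle. Starting from the assumption that the Toeplitz matrix of $(a_n)$ is $\tp$, the $2\times 2$ minors give $a_n^2\geq a_{n-1}a_{n+1}$ for $n\geq 1$, so $(a_n)$ is log-concave and $f(x)=\sum a_n x^n$ has a positive radius of convergence. The plan would be: first, to use the higher-order determinantal inequalities to show that $f$ extends to a meromorphic function on all of $\mathbb{C}$; second, to argue that every pole of $f$ must be real and positive and every zero must be real and negative, by analysing the sign pattern forced on the Taylor coefficients of $f$ near a non-real singularity and comparing with the $\tp$ conditions; third, to obtain growth bounds ensuring that $f$ is of order at most one and minimal type on the complement of its poles; and finally to invoke Hadamard's factorisation theorem to produce the product representation with the prescribed convergence $\sum(\alpha_i+\beta_i)<\infty$.

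The hard step is the very first of these---the passage from a formal power series with $\tp$ Toeplitz matrix to a meromorphic function on the whole plane---which is precisely the content of Edrei's 1952 contribution and cannot be obtained from log-concavity alone. One must extract a quantitative analyticity statement from the infinite family of higher-order minor inequalities, typically by recognising certain ratios of those minors as transforms whose positivity forces analytic continuation into a larger domain. In the present paper this lemma is legitimately invoked as a black box from Karlin's monograph, and my proposed proof would simply follow the classical Schoenberg-Edrei route sketched above.
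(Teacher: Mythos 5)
The paper does not actually prove this lemma: it is quoted as a classical result with a pointer to Karlin's monograph (p.~154), so there is no internal proof to compare against. Judged on its own terms, your proposal splits into a half that works and a half that does not. The sufficiency direction is essentially correct and standard: the Toeplitz matrix of a convolution is the product of the two Toeplitz matrices, Cauchy--Binet shows products of $\tp$ matrices are $\tp$, and the elementary factors $1+\alpha x$, $(1-\beta x)^{-1}$ and $e^{\gamma x}$ each generate $\pf$ sequences (bidiagonal, geometric, and the classical total positivity of $(1/n!)_{n\geq 0}$, respectively). One sentence is missing there: to pass from finite to infinite products you need that $\sum(\alpha_i+\beta_i)<\infty$ forces coefficientwise convergence of the partial products and that total positivity survives entrywise limits because minors are continuous functions of the entries.

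The necessity direction, however, is a roadmap rather than a proof, and the gap sits exactly where you locate it. The step ``use the higher-order determinantal inequalities to show that $f$ extends to a meromorphic function on all of $\mathbb{C}$'' \emph{is} the substance of Edrei's theorem; nothing in the sketch indicates how the infinite family of minor inequalities is converted into analytic continuation, and every subsequent step (reality and sign of the zeros and poles, the growth estimates, Hadamard factorisation with $\sum(\alpha_i+\beta_i)<\infty$) presupposes it. Your proposed second step --- inferring reality of the singularities from ``the sign pattern forced on the Taylor coefficients near a non-real singularity'' --- is also not how the classical argument runs and would need to be replaced by something concrete. Since you explicitly concede this and defer to the literature, your treatment lands in the same place as the paper's (a citation to Karlin), which is legitimate usage; but the proposal cannot be counted as a self-contained proof of the ``only if'' half.
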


%The multiplier sequence is a very useful tool to preserve the real-rootedness of polynomials under the Hadamard product.
A (infinite) sequence $\lambda=(\lambda_k)_{k\geq0}$ of real numbers is called a
\emph{multiplier sequence (of the first kind)} if a polynomial
$a_0 + a_1x + \cdots + a_n x^n$
has only real zeros, then so does the polynomial
$a_0 \lambda_0 + a_1 \lambda_1 x + \cdots + a_n \lambda_n x^n$.
The following result is the transcendental characterization for multiplier sequence.
%%%%%%%%%%%%%%%%%%%%%%%%%%%%%%%%%%
\begin{lem}\label{lem+MS+TC}\cite{PS14}
A non-negative sequence $(\lambda_n)_{n \geq 0}$ ($\lambda_0 \neq 0$) is a multiplier sequence if and only if
its generating function is a real entire function which has the form
\begin{equation*}
\sum_{n=0}^{+\infty}\frac{\lambda_n}{n!}x^n=\lambda_0e^{\gamma x}\prod_{i=1}^{+\infty}(1+\alpha_ix),
\end{equation*}
where $\gamma \geq 0, \alpha_i \geq 0$ and $\sum_{i=1}^{+\infty}\alpha_i < +\infty$.
\end{lem}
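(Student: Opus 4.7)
The plan is to derive this as the classical P\'olya--Schur theorem by reduction to the Laguerre--P\'olya class $\mathcal{LP}^+$, the set of entire functions that arise as locally uniform limits on $\mathbb{C}$ of polynomials whose zeros are all real and non-positive. Hadamard's factorization, combined with the fact that functions in this class have genus at most one, implies that $\Phi\in\mathcal{LP}^+$ if and only if $\Phi(x)=c\, x^m e^{\gamma x}\prod_{i\geq 1}(1+\alpha_i x)$ with $c,\gamma,\alpha_i\geq 0$ and $\sum \alpha_i<\infty$; since $\lambda_0\neq 0$ the $x^m$ factor disappears.

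For the necessity direction, I would leverage Lemma \ref{lem+alg+MS+n}. Assuming $(\lambda_n)_{n\geq 0}$ is a multiplier sequence, the lemma applied with each $N$ shows that the Jensen polynomial $J_N(x):=\sum_{k=0}^N\binom{N}{k}\lambda_k x^k$ has only real zeros of the same sign, necessarily non-positive since $\lambda_k\geq 0$. Rescaling yields
\begin{equation*}
J_N(x/N)=\sum_{k=0}^N \frac{\lambda_k}{k!}\cdot\frac{N!}{(N-k)!\,N^k}\,x^k,
\end{equation*}
and $N!/((N-k)!\,N^k)\to 1$ for each fixed $k$, so $J_N(x/N)\to \Phi(x):=\sum_{n\geq 0}\lambda_n x^n/n!$ locally uniformly. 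Hurwitz's theorem then guarantees that the non-vanishing limit $\Phi$ inherits the property that all its zeros are real and non-positive, placing $\Phi\in\mathcal{LP}^+$, and the Hadamard factorization above delivers the asserted form.

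For the sufficiency direction, assume $\Phi(x)=\lambda_0 e^{\gamma x}\prod_{i\geq 1}(1+\alpha_i x)$ and let $T_\Phi$ be the linear operator sending $\sum a_k x^k$ to $\sum a_k\lambda_k x^k$. Truncating to $\Phi_M(x):=\lambda_0 e^{\gamma x}\prod_{i=1}^M(1+\alpha_i x)$, the Taylor coefficients of $\Phi_M$ at $0$ converge to those of $\Phi$, so on any polynomial of bounded degree one has $T_{\Phi_M}p\to T_\Phi p$ coefficientwise, hence uniformly on compacta. By Hurwitz's theorem, it therefore suffices to check that each $T_{\Phi_M}$ preserves real-rootedness. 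This I would establish by factoring $T_{\Phi_M}$ into operators of the form $I+\alpha_i D$ (one per linear factor $(1+\alpha_i x)$) and a dilation corresponding to $e^{\gamma x}$, then invoking the Hermite--Poulain theorem: if $q$ is a polynomial with only real zeros of one sign, the differential operator $q(D)$ maps real-rooted polynomials to real-rooted polynomials.

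The main obstacle is the bookkeeping translating the product factorization of $\Phi$ on the generating-function side (where the normalization is by $n!$) into a composition factorization of $T_\Phi$ on the polynomial side; the cleanest route uses Laguerre's symbol calculus, under which $T_\Phi$ acts on polynomials as $\Phi(D)$ via the pairing $\langle \Phi, p\rangle = \sum_k \frac{\Phi^{(k)}(0)}{k!}\,p_k$ appropriately weighted. Once this dictionary is installed, the decomposition is multiplicative and each piece is handled by Hermite--Poulain plus a trivial translation, after which a single Hurwitz-continuity argument closes the sufficiency direction.
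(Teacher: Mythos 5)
The paper does not actually prove this lemma: it is imported verbatim from P\'olya--Schur \cite{PS14} as a classical result, so there is no internal argument to compare yours against, and your attempt must be judged on its own terms. Your necessity direction follows the standard route but silently assumes part of the conclusion: to get locally uniform convergence of $J_N(x/N)$ to $\Phi(x)=\sum_n\lambda_nx^n/n!$ you must already know that this series has infinite radius of convergence. The gap is fillable --- Newton's inequalities for the real-rooted Jensen polynomials $J_N$ give $\lambda_k^2\ge\lambda_{k-1}\lambda_{k+1}$, hence $\lambda_k\le\lambda_0(\lambda_1/\lambda_0)^k$, so $\Phi$ is entire of exponential type and dominated convergence applies --- but this step has to be said, since entirety of the generating function is itself one of the assertions being proved.

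The sufficiency direction contains a genuine error in its central mechanism. The diagonal operator $T_\Phi\colon\sum_ka_kx^k\mapsto\sum_ka_k\lambda_kx^k$ is \emph{not} $\Phi(D)$, and it does not factor as a composition of operators $I+\alpha_iD$: multiplying the generating function by $(1+\alpha x)$ replaces $\lambda_n$ by $\lambda_n+\alpha n\lambda_{n-1}$, which corresponds to $p\mapsto T_\Phi p+\alpha x\,T_\Phi(p')$, not to pre- or post-composition with $I+\alpha D$. Indeed $\Phi(D)x^n=\sum_j\binom{n}{j}\lambda_jx^{n-j}$, which is the \emph{reversal} of the Jensen polynomial $g_n(x)=\sum_k\binom{n}{k}\lambda_kx^k$ rather than $\lambda_nx^n$. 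The standard repair uses exactly the algebraic characterization available as Lemma~\ref{lem+alg+MS}: it suffices to prove that each $g_n$ is real-rooted with zeros of one sign, and since $x^ng_n(1/x)=\Phi(D)x^n$ this follows from Hermite--Poulain applied to the truncated operators $\lambda_0e^{\gamma D}\prod_{i\le M}(I+\alpha_iD)$ acting on $x^n$, followed by the Hurwitz limit argument you describe (alternatively one invokes the Malo--Schur--Szeg\H{o} composition theorem). So the ingredients you name are the right ones, but the dictionary you propose between the product form of $\Phi$ and the operator $T_\Phi$ is incorrect, and without rerouting through Lemma~\ref{lem+alg+MS} the argument does not close.
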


In addition, there is a classical algebraic characterization for multiplier sequence as in the following.
\begin{lem}\label{lem+alg+MS}\cite[p.100]{PS14}
	A non-negative sequence $(\lambda_k)_{k \geq 0}$ is a multiplier sequence if and only if
	the polynomial
	\begin{equation*}
		\sum_{k=0}^n\binom{n}{k}\lambda_kx^k
	\end{equation*}
	has only real zeros with same sign for each $n \geq 1$.
\end{lem}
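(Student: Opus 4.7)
The plan is to reduce the infinite-sequence statement to the finite-degree version already recorded as Lemma \ref{lem+alg+MS+n}. The forward direction of the claimed equivalence is nearly immediate: if $(\lambda_k)_{k \geq 0}$ is a multiplier sequence, then applying the multiplier property to the fully real-rooted polynomial $(1+x)^n = \sum_{k=0}^n \binom{n}{k} x^k$ shows that $\sum_{k=0}^n \binom{n}{k} \lambda_k x^k$ is real-rooted for each $n$, and because $\lambda_k \geq 0$ its coefficients are non-negative, forcing all its real zeros to be non-positive and hence of the same sign.

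For the converse I would first record the easy observation that a non-negative infinite sequence $(\lambda_k)_{k \geq 0}$ is a multiplier sequence if and only if, for every $n \geq 1$, its truncation $(\lambda_k)_{k=0}^n$ is a multiplier $n$-sequence. This is simply because any real-rooted polynomial $p(x) = a_0 + a_1 x + \cdots + a_m x^m$ has some finite degree $m$ and only involves $\lambda_0, \ldots, \lambda_m$ under the coefficientwise transformation, so picking any $n \geq m$ and invoking the truncation hypothesis yields the real-rootedness of $a_0 \lambda_0 + a_1 \lambda_1 x + \cdots + a_m \lambda_m x^m$. The reverse implication, that each truncation of a multiplier sequence is itself a multiplier $n$-sequence, is even more transparent.

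Combining this reduction with Lemma \ref{lem+alg+MS+n} yields exactly the stated infinite characterization: the same-sign condition supplied by the finite lemma is compatible with the non-positivity forced by the assumption $\lambda_k \geq 0$, so the two formulations are equivalent. At this point nothing beyond bookkeeping is needed.

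The main obstacle, therefore, is not in the reduction above but in Lemma \ref{lem+alg+MS+n} itself, which is the substantive classical ingredient being invoked. Its nontrivial direction, namely that real-rootedness of $\sum_{k=0}^n \binom{n}{k} \lambda_k x^k$ with zeros of one sign implies the multiplier $n$-sequence property, is typically proved via the Schur--Hadamard product of polynomials whose zeros are non-positive, or equivalently via the Hermite--Kakeya--Obreschkoff interlacing theorem applied to suitable families of real-rooted polynomials. Granting that ingredient, which is what the citation to \cite{PS14} affords, the passage from the finite to the infinite case is essentially formal.
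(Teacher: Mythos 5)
Your argument is correct as far as it goes, but note that the paper offers no proof of this lemma at all: it is stated as a cited classical fact from P\'olya--Schur \cite[p.100]{PS14}, so there is no internal proof to compare against. What you have done is derive the infinite statement formally from the finite statement recorded as Lemma \ref{lem+alg+MS+n}. That reduction is sound: the forward direction by applying the multiplier property to $(1+x)^n$ and observing that non-negative coefficients force non-positive (hence same-sign) zeros is the standard argument, and the converse via the observation that a sequence is a multiplier sequence if and only if each truncation $(\lambda_k)_{k=0}^n$ is a multiplier $n$-sequence is immediate, since any input polynomial has finite degree $m$ and one may take $n=m$. The honest caveat, which you acknowledge, is that this does not eliminate the substantive analytic content --- it merely relocates it into Lemma \ref{lem+alg+MS+n} (Craven--Csordas, Theorem 3.7 of \cite{CC77}), whose hard direction is essentially the same Schur--Hadamard composition argument underlying the original P\'olya--Schur theorem. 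Since that finite lemma has an independent published proof, there is no circularity, and your derivation is a legitimate (if bookkeeping-level) justification of the statement the paper takes on faith.
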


\begin{thm}
The matrix $N=[N_{n,k}]_{n\geq k\geq0}$ is $\tp$.
\end{thm}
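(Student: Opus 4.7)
The plan is to factor $N$ as $D_R\cdot T\cdot D_C$, where $D_R$ and $D_C$ are non-negative diagonal matrices and $T$ is a Toeplitz matrix, and then apply Lemma~\ref{lem+SE+PF} to prove that $T$ is $\tp$. The starting point is the algebraic identity
\[
N_{n,k}=\frac{k(n-k)}{4n+2}\binom{2n+2}{2k+1}
      =(n+1)(2n)!\cdot\frac{k}{(2k+1)!}\cdot\frac{n-k}{(2n-2k+1)!},
\]
which exhibits $N=D_R T D_C$ with $D_R=\mathrm{diag}\bigl((n+1)(2n)!\bigr)_{n\ge 0}$, $D_C=\mathrm{diag}\bigl(k/(2k+1)!\bigr)_{k\ge 0}$, and $T=[t_{n-k}]_{n,k\ge 0}$ the Toeplitz matrix associated with $t_m=m/(2m+1)!$ (with $t_m=0$ for $m<0$). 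Since multiplication by non-negative diagonal matrices preserves $\tp$, the problem reduces to showing that $(t_m)_{m\ge 0}$ is a $\pf$ sequence.

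Because $t_0=0$, I would first pass to the shifted sequence $b_m:=t_{m+1}=(m+1)/(2m+3)!$; prepending a zero preserves the $\pf$ property, for the enlarged Toeplitz matrix is obtained by inserting a zero row on top, and every square submatrix either contains that row (with vanishing determinant) or coincides with a square submatrix of the original. A short manipulation of the expansions of $\cosh\sqrt{x}$ and $\sinh\sqrt{x}/\sqrt{x}$ yields the generating function
\[
B(x):=\sum_{m\ge 0}b_m x^m
     =\frac{1}{2x}\Bigl(\cosh\sqrt{x}-\frac{\sinh\sqrt{x}}{\sqrt{x}}\Bigr),
\]
which is an entire function of $x$ of order $1/2$ with $B(0)=1/6$.

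The heart of the argument is the localization of the zeros of $B$. Setting $\Phi(u):=\sinh u-u\cosh u$, an odd entire function with a zero of order three at the origin, one has $2xB(x)=-\Phi(\sqrt{x})/\sqrt{x}$, so the non-zero zeros of $B$ correspond via $x=u^2$ to the non-zero solutions of $\tanh u=u$. The substitution $u=iw$ turns this into $\tan w=w$; from the decomposition
\[
\tan(x+iy)=\frac{\sin 2x+i\sinh 2y}{\cos 2x+\cosh 2y}
\]
together with the bounds $|\sin 2x/x|\le 2<|\sinh 2y/y|$ for $y\ne 0$, every non-trivial solution of $\tan w=w$ must be real, say $\pm v_k$ with $v_k>0$. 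Hence the non-zero zeros of $\Phi$ are the purely imaginary numbers $\pm iv_k$, and $B$ has only negative real zeros $-v_k^2$. Since $v_k\sim(k+\tfrac12)\pi$ gives $\sum v_k^{-2}<\infty$, Hadamard's factorization produces
\[
B(x)=\tfrac{1}{6}\prod_{k\ge 1}\Bigl(1+\frac{x}{v_k^2}\Bigr),
\]
which is exactly the Schoenberg-Edrei form of Lemma~\ref{lem+SE+PF}. Consequently $(b_m)$, and hence $(t_m)$, is $\pf$, so $T$ is $\tp$, and $N=D_R T D_C$ is $\tp$.

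The main obstacle is the localization of the zeros of $\Phi$, namely the claim that $\tan w=w$ admits only real roots; this is the one non-routine analytic input and is equivalent to the classical statement that the spherical Bessel function $j_1(w)=(\sin w-w\cos w)/w^2$ has only real zeros.
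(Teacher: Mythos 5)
Your proposal is correct, and its opening reduction coincides with the paper's: both factor $N_{n,k}$ as a non-negative row factor times $\tfrac{k}{(2k+1)!}$ times $\tfrac{n-k}{(2n-2k+1)!}$, observe that diagonal scalings preserve total positivity, and so reduce everything to showing that $\bigl(k/(2k+1)!\bigr)_{k\ge0}$ is a P\'olya frequency sequence. Where you genuinely diverge is in how that $\pf$ property is established. The paper routes through multiplier sequences: combining Lemma~\ref{lem+SE+PF} with the P\'olya--Schur transcendental characterization (Lemma~\ref{lem+MS+TC}), it suffices that $\bigl(k\cdot k!/(2k+1)!\bigr)_{k\ge0}$ be a multiplier sequence, which by the algebraic characterization (Lemma~\ref{lem+alg+MS}) reduces to the real-rootedness of the polynomials $g_n(x)=\sum_k\binom{n}{k}\frac{k\cdot k!}{(2k+1)!}x^k$; this is proved by writing $g_n=n!\,xh_n'$ and running an induction with Rolle's theorem on $h_n(x)=\sum_k x^k/\bigl((2k+1)!(n-k)!\bigr)$. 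You instead verify the Schoenberg--Edrei form directly: you compute the generating function in closed form as $\frac{1}{2x}\bigl(\cosh\sqrt{x}-\sinh\sqrt{x}/\sqrt{x}\bigr)$, localize its zeros on the negative real axis via the classical fact that $\tan w=w$ has only real roots (your sketch of that fact via the real/imaginary decomposition of $\tan(x+iy)$ is the standard argument and is sound, modulo treating the case $x=0$, $y\neq0$ separately, where it degenerates to $\tanh y=y$), and then invoke Hadamard factorization for an entire function of order $1/2$. Your route is less elementary --- it needs zero localization of a transcendental function and Hadamard's theorem rather than just Rolle --- but it is more informative, identifying the zeros of the generating function explicitly as $-v_k^2$ with $\tan v_k=v_k$ (equivalently, the zeros of the spherical Bessel function $j_1$). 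It also confronts head-on a point the paper glosses over: Lemmas~\ref{lem+SE+PF} and~\ref{lem+MS+TC} are stated for sequences with non-vanishing initial term, whereas the sequence at hand starts with $0$; your prepended-zero-row argument for the Toeplitz matrix is exactly the patch needed, and applies equally to repair the paper's own application of those lemmas.
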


\begin{proof}
By \eqref{poly+exp+formula}, we have
\begin{equation*}
N_{n,k} = \frac{k(n-k)}{4n+2}\binom{2n+2}{2k+1}
        = \frac{k(2n+2)!}{(4n+2)(2k+1)!}\times\frac{n-k}{(2n-2k+1)!}.
\end{equation*}
Obviously, the total positivity of matrix $N$ is equivalent to that of matrix 
$$T:=\left[\frac{n-k}{(2n-2k+1)!}\right]_{n\geq k\geq0}.$$
Therefore, it suffices to show that the sequence $\left(\frac{k}{(2k+1)!}\right)_{k\geq0}$ is $\pf$.

Combining Lemma \ref{lem+SE+PF} and \ref{lem+MS+TC}, a non-negative sequence $\left(\frac{a_k}{k!}\right)_{k\geq0}$ is $\pf$
if the sequence $(a_k)_{k \geq 0}$ is a multiplier sequence.
Then we need to prove that the sequence $\left(\frac{k\times k!}{(2k+1)!}\right)_{k\geq0}$ is a multiplier sequence.
Invoking Lemma~\ref{lem+alg+MS} it is sufficient to show that the polynomial
\begin{equation*}
g_n(x):=\sum_{k=0}^n\binom{n}{k}\frac{k\times k!}{(2k+1)!}x^k
\end{equation*}
has only real zeros for each $n \in \mathbb{N}$.

Define the polynomial
\begin{equation*}
h_n(x):=\sum_{k=0}^n\frac{1}{(2k+1)!(n-k)!}x^k.
\end{equation*}
Note that the polynomial $h_n(x)$ satisfies the following recurrence relation
\begin{equation*}
(2n+2)(2n+3)h_{n+1}(x)=(4n+6+x)h_{n}(x)+4xh_{n}^{'}(x).
\end{equation*}
Then, it is easy to verify that
\begin{equation*}
e^{\frac{x}{4}+\frac{2n+1}{2}\ln(x)}(2n+2)(2n+3)h_{n+1}(x)=\left(e^{\frac{x}{4}+\frac{2n+1}{2}\ln(x)}(4xh_n(x))\right)^{'}.
\end{equation*}
By induction on $n$ and Rolle's theorem, it is easy to see that $h_n(x)$ has only real zeros for each positive integer $n$.
As $g_n(x)=n!xh_{n}^{'}(x)$,
the polynomial $g_n(x)$ is real-rooted, which implies the desired result.
\end{proof}

\section*{Acknowledgments}
The first author was supported by the \emph{China Scholarship Council} (No. 202206060088).
This work was done during his visit at Universit\'e Claude Bernard Lyon 1 in 2022-2023.

%%%%%%%%%%%%%%%%%%%%%

\end{document}